\newcommand\restr[2]{{
  \left.\kern-\nulldelimiterspace 
  #1 
  \vphantom{\big|} 
  \right|_{#2} 
  }}
\newcommand{\Ric}{\text{Ric}}
\def\sideremark#1{\ifvmode\leavevmode\fi\vadjust{\vbox to0pt{\vss
 \hbox to 0pt{\hskip\hsize\hskip1em
 \vbox{\hsize3cm\tiny\raggedright\pretolerance10000
 \noindent #1\hfill}\hss}\vbox to8pt{\vfil}\vss}}}
\newtheorem{theorem}{Theorem}[section]
\newtheorem{proposition}[theorem]{Proposition}
\newtheorem{lemma}[theorem]{Lemma}
\newtheorem{corollary}[theorem]{Corollary}
\theoremstyle{definition}
\newtheorem{definition}[theorem]{Definition}
\theoremstyle{remark}
\numberwithin{equation}{section}
\keywords{ambient metric; Poincar\'e metric; smooth metric measure space; renormalized volume coefficients}
\subjclass[2010]{Primary 53A30; Secondary 53A55, 31C12}
\title{Weighted renormalized volume coefficients}
\author{Ayush Khaitan}
\address{417 McAllister Building \\ Penn State University \\ University Park, PA 16802 \\ USA}
\email{auk480@psu.edu}
\begin{document}
\keywords{ambient metric; Poincar\'e metric; smooth metric measure space; renormalized volume coefficient}
\subjclass[2020]{Primary 53A31; Secondary 53A55, 31C12}
\maketitle
\begin{abstract}
We define weighted renormalized volume coefficients and prove that they are variational. We also prove that they can be written as polynomials of weighted extended obstruction tensors, the weighted Schouten tensor, and the weighted Schouten scalar. \end{abstract}

\section{Introduction}
The $\sigma_k(g^{-1}P)$-Yamabe problem, introduced by Viaclovsky in~\cite{Viaclovsky2000}, asks if there exists a metric with constant $\sigma_k$-curvature. Note that if $k\neq n/2$ and either $k\leq 2$ or $g$ is locally conformally flat, then $\sigma_k(g^{-1}P)=c$ is the Euler-Lagrange equation of the functional $\int_M \sigma_k(g^{-1}P)\, dv_g$ under conformal variation subject to the constraint that $\mathrm{Vol}_g(M)=1$. However, if $k\geq 3$ and $g$ is not locally conformally flat, then Branson and Gover proved in~\cite{BransonGover2008} that $\sigma_k(g^{-1}P)=c$ is not the Euler-Lagrange equation of any functional. 

Renormalized volume coefficients, denoted as $v_k$, first arose in the context of AdS/CFT correspondence~\cite{graham2000}. For a Poincar\'e-Einstein metric of the form $g_+=\frac{dr^2+g_r}{r^2}$, they are defined as $$\left(\frac{\operatorname{det} g_{\rho}}{\operatorname{det} g_{0}}\right)^{1 / 2} \sim 1+\sum_{k=1}^{\infty} v_{k} \rho^{k},$$ where $\rho=-\frac{1}{2}r^2$. In~\cite{ChangFang2008}, Chang and Fang show that $v_k$ is variational for $k\neq n/2$, and that $v_k=\sigma_k$ in the cases where $\sigma_k$ is variational; $v_{n/2}$ is shown to be variational in~\cites{BrendleViaclovsky2004 ,CaseLinYuan2016}. In addition to this, the conformal transformation law of $v_k$ is second order in the conformal factor~\cite{Graham2009} and Einstein metrics are ``stable" critical points of the total functional~\cite{ChangFangGraham2012}.

A \emph{smooth metric measure space} is a five-tuple $(M^n,g,f,m,\mu)$, where $(M^n,g)$ is a Riemannian manifold, $f$ is a smooth function defined on $M$, $m\in\mathbb{R}$ is a dimensional parameter, and $\mu\in\mathbb{R}_+$ is an auxiliary curvature parameter~\cites{Case2014s,MR3415769}. If $m\in\mathbb{N}$, a smooth metric measure space may be thought of as the warped product $(M^n\times F^m(\mu),g\oplus f^2 h)$ where $(F^m(\mu),h)$ is the $m$-dimensional simply connected spaceform of constant curvature $\mu$~\cite{Case2014s}. 

The \emph{space of metric measure structures}~\cite{Case2014s} on $(M^n,m,\mu)$ is $$\mathfrak{M}(M,m,\mu):=\mathrm{Met}(M)\times C^\infty(M; \mathbb{R}_+),$$ where $\mathrm{Met}(M)$ is the space of Riemannian metrics on $M$, and $C^\infty(M; \mathbb{R}_+)$ is the space of positive smooth functions on $M$.

Smooth metric measure spaces arise in many ways, including as (possibly collapsed) limits of sequences of Riemannian manifolds~\cite{CheegerColding2000a}, as smooth manifolds satisfying curvature-dimension inequalities~\cites{BakryEmery1985,Wei_Wylie}, as the geometric framework~\cite{CaseChang2013} for studying curved analogues of the Caffarelli--Silvestre extension for defining the fractional Laplacian~\cite{CaffarelliSilvestre2007}, and, in the limiting case $m=\infty$, as a geometric framework for the realization of the Ricci flow as a gradient flow~\cite{Perelman1}.

In \cite{Case2014s}, Case proved that a weighted version of $\sigma_k(g^{-1}P)$ is variational for $k\in\{1,2\}$ or under the weighted locally conformally flat assumption. 
It is natural to ask if one can similarly define a weighted version of $v_k$ that is variational. Case showed that this is true for $v_3$~\cite{Case2016v} in the case $m=\infty$.  Using the weighted ambient space and the weighted Poincar\'e space developed by Case and the author in~\cite{CaseKhaitan2022}, we show that this is true in general.

Let $(M^n,g,f,m,\mu)$ be a smooth metric measure space. When $m>0$, we set $\phi:=-m\ln f$, so that
\[ \mathrm{dvol}_\phi:=f^m \mathrm{dvol}_g = e^{-\phi}\,\mathrm{dvol}_g . \] The \emph{Bakry--\'Emery Ricci curvature} $Ric_\phi^m$ and the \emph{weighted scalar curvature} $R_\phi^m$ of $(M^n,g,f,m,\mu)$ are defined as
\begin{align*}
\Ric_\phi^m &= \Ric + \nabla^2\phi - \frac{1}{m} d\phi \otimes d\phi,\\
R_\phi^m &= R + 2\Delta\phi - \frac{m+1}{m}|\nabla\phi|^2 + m(m-1)\mu e^{2\phi/m} .
\end{align*}
Roughly speaking, a \emph{weighted ambient space} is a metric measure structure $(\widetilde{g},\widetilde{f})$ on $\mathbb{R}_+\times M^n\times (-\epsilon,\epsilon)$ such that 
\begin{enumerate}[label=(\roman*)]
\item if $n+m$ is not an even integer, then $\widetilde{\text{Ric}}_\phi^m,\widetilde{F_\phi^m}=O(\rho^\infty)$, where $\rho$ is the coordinate on $(-\epsilon,\epsilon)$ and $$\widetilde{F_\phi^m}:=\widetilde{f}\widetilde{\Delta}\widetilde{f}+(m-1)(|\widetilde{\nabla} \widetilde{f}|^2-\mu);$$ 

\item if $n+m$ is an even integer, then \begin{align*}\widetilde{\text{Ric}}_\phi^m,\widetilde{F_\phi^m}&=O(\rho^{\frac{n+m}{2}-1}),\\ g^{ij}(\widetilde{\text{Ric}}_\phi^m)_{ij}-{m}{f}^{-2}  \widetilde{F_\phi^m}&=O(\rho^{\frac{n+m}{2}}),
\end{align*} where $i,j$ denote the coordinates on $M$.  
\end{enumerate}
Consider a weighted ambient metric measure structure of the form (cf.\ \cite{CaseKhaitan2022})
\begin{equation}
\label{straight-normal-metric}
    \begin{aligned}
    \widetilde{g}&=2\rho dt^2+2td\rho dt+t^2 g_\rho,\\
    \widetilde{f}&=tf_\rho.
    \end{aligned}
\end{equation}
Metric measure structures of the form \cref{straight-normal-metric} are called \emph{straight} and \emph{normal}. Case and the author prove in~\cite{CaseKhaitan2022} that for $n+m\notin 2\mathbb{N}$, a straight and normal metric measure structure $(\widetilde{g},\widetilde{f})$ is uniquely determined modulo $O(\rho^\infty)$. For $n+m\in 2\mathbb{N}$, it is uniquely determined modulo $O(\rho^{\frac{n+m}{2}})$, while $\frac{1}{2}g^{kl}(g_\rho)_{kl}+\frac{m}{f}f_\rho$ is uniquely determined modulo $O(\rho^{\frac{n+m}{2}+1})$.

For a weighted ambient metric measure of the form \cref{straight-normal-metric}, we define \emph{weighted renormalized volume coefficients} $ v_{k,\phi}^m $ as the following
\begin{equation}
\label{volume-expanding}
\left(\frac{f_\rho}{f}\right)^m\left(\frac{\operatorname{det} g_\rho}{\operatorname{det} g}\right)^{\frac{1}{2}}=1+\sum\limits_{k=1}^\infty v_{k,\phi}^m \rho^k.
\end{equation}
The terms $ v_{k,\phi}^m $ are uniquely determined for all $k\geq 1$
if $n+m\notin 2\mathbb{N}$, and for $1\leq k\leq \frac{n+m}{2}$ if $n+m\in 2\mathbb{N}$. Note that we use a weighted ambient metric measure here instead of a weighted Poincar\'e metric measure. This is because we need a weighted ambient metric measure to define weighted extended obstruction tensors, which we require in \cref{first-theorem-wrvc,linear-expansion-theorem,conformal-change-volume-equation}.

\begin{definition}
\label{definition-weighted-extended-obstruction-tensors}
Let $k\geq 1$, $k<(n+m)/2-1$ for $n+m\in 2\mathbb{N}$. The $k^\text{th}$ \emph{weighted extended obstruction tensor} is \begin{equation*}(\Omega_\phi^m)^{(k)}_{ij}= \restr{\widetilde{R}_{\infty ij\infty,\underbrace{\scriptstyle \infty\dots\infty}_{k-1}}}{\rho=0}. \end{equation*}
\end{definition}

For a smooth metric measure space $(M^n,g,f,m,\mu)$, the \emph{weighted Schouten tensor $P_\phi^m$}, the \emph{weighted Schouten scalar $J_\phi^m$}, and $Y_\phi^m$~\cite{Case2014s} are defined as
\begin{align*}
    P_\phi^m & := \frac{1}{n+m-2}(\Ric_\phi^m - J_\phi^m g), \\
    J_\phi^m & := \frac{1}{2(n+m-1)}R_\phi^m ,\\
    Y_\phi^m& :=J_\phi^m-\mathrm{tr}_g P_\phi^m.
\end{align*}
The first two weighted renormalized volume coefficients are~(cf.\ \cite{Case2014s})
\begin{align*}
v_{1,\phi}^m&=J_\phi^m,\\
v_{2,\phi}^m&=\frac{1}{2}\big[(J_\phi^m)^2-|P_\phi^m|^2-\frac{1}{m}(Y_\phi^m)^2\big].
\end{align*}
In this article, we generalize the results of \cites{ChangFang2008,ChangFangGraham2012,Graham2009} to smooth metric measure spaces using the weighted ambient metric measure and weighted Poincar\'e metric measure constructed in~\cite{CaseKhaitan2022}.

Let $k\geq 1$, $k\leq (n+m)/2$ for $n+m\in 2\mathbb{N}$. We define 
$$ \mathcal{F}_{k,\phi}^m :=\int_M  v_{k,\phi}^m \, \mathrm{dvol}_\phi.$$

Consider the conformal transformation $(\widehat{g},\widehat{f})=(e^{2\omega}g,e^\omega f)$. We show that, given a straight and normal weighted ambient metric measure of the form of \cref{straight-normal-metric}, the conformal transformation formulas for $\restr{\partial_\rho^k}{\rho=0}g_\rho$, $\restr{\partial_\rho^k}{\rho=0}f_\rho$ and $v_{k,\phi}^m$ involve at most the second derivatives of $\omega$. Note that for a straight and normal weighted ambient metric, $\rho$ depends on the choice of metric measure structure $(g,f)$. Hence $\restr{\partial_\rho^k}{\rho=0}g_\rho$, $\restr{\partial_\rho^k}{\rho=0}f_\rho$ and $v_{k,\phi}^m$ are well-defined.
\begin{theorem}
\label{first-theorem-wrvc}
Let $k\geq 1$, and $k< (n+m)/2$ for $n+m\in 2\mathbb{N}$.
Under conformal change $\widehat{g}=e^{2 \omega} g$, the conformal transformation laws of $\partial_{\rho}^{k} g$ and  $\partial_{\rho}^{k} f$ involve at most the second derivatives of $\omega .$ The same is true for $\frac{1}{2} g^{i j} \partial^{k} g_{i j}+\frac{m}{f} \partial^{k}f$ and $v^m_{k,\phi}$ for $k\geq 1$, $k \leq (n+m)/ 2$ for $n+m\in 2\mathbb{N}$.
\end{theorem}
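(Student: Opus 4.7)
My plan is to follow the approach of Graham~\cite{Graham2009} in the unweighted case, adapted to the weighted framework via the essential uniqueness of the straight and normal weighted ambient metric measure recalled just before the theorem. Applied to both $(g, f)$ and the conformally related pair $(\hat g, \hat f) := (e^{2\omega}g, e^{\omega}f)$, this uniqueness implies that the associated weighted ambient metric measures $(\tilde g, \tilde f)$ and $(\hat{\tilde g}, \hat{\tilde f})$ differ by a diffeomorphism $\Phi$ of $\mathbb{R}_+ \times M \times (-\epsilon, \epsilon)$, well-defined modulo the appropriate order in $\rho$. The theorem then reduces to controlling the $\omega$-dependence of $\Phi$ order by order.

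Because $\tilde g$ is straight, $\Phi$ must respect the dilation structure, so I will write $\Phi(t, x, \rho) = (T(x,\rho)t,\, X(x,\rho),\, P(x,\rho))$. Demanding that $\Phi^*(\tilde g, \tilde f)$ is in straight normal form with induced data $(\hat g, \hat f)$ at $\hat\rho = 0$ yields initial conditions $T(\cdot, 0) = e^{-\omega}$, $X(\cdot, 0) = \mathrm{id}$, $P(\cdot, 0) = 0$ together with a first-order ODE system in $\rho$ for $(T, X, P)$ whose coefficients are polynomial in the Taylor coefficients of $g_\rho$ and $f_\rho$ at $\rho = 0$; in particular, these coefficients are independent of $\omega$. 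A direct computation at $\rho = 0$ shows that $\partial_\rho T|_0$, $\partial_\rho X|_0$, and $\partial_\rho P|_0$ involve $\omega$, $d\omega$, and curvature quantities of $(g,f)$, but no higher derivatives of $\omega$.

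Iterating the ODE, $\partial_\rho^k(T, X, P)|_0$ is a polynomial in (i) spatial derivatives of $\partial_\rho^{k'}(T, X, P)|_0$ for $k' < k$, and (ii) the Taylor coefficients of $(g_\rho, f_\rho)$, which by the weighted ambient curvature equations $\widetilde{\Ric_\phi^m} = 0$ and $\widetilde{F_\phi^m} = 0$ (to the appropriate order) are polynomial in $g$, $f$, and their derivatives, with no $\omega$-dependence. Since the first $\rho$-derivatives already incorporate up to $\nabla^2 \omega$, a careful induction in the spirit of~\cite{Graham2009} will show that no derivative of $\omega$ beyond order two enters $\partial_\rho^k(T, X, P)|_0$ at any $k$. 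Extracting $g$- and $f$-parts from $\Phi^*(\tilde g, \tilde f) = (\hat{\tilde g}, \hat{\tilde f})$ via the Leibniz rule then yields the first statement of the theorem for $k < (n+m)/2$. The critical case $k = (n+m)/2$ (for $n+m \in 2\mathbb{N}$) uses the improved uniqueness of the trace combination $\frac{1}{2}g^{kl}(g_\rho)_{kl}+\frac{m}{f}f_\rho$ to order $\rho^{(n+m)/2 + 1}$, and the analogous ODE analysis yields the trace claim. Finally, logarithmic differentiation of~\eqref{volume-expanding} expresses $v_{k,\phi}^m$ as a universal polynomial in the trace quantities $\frac{1}{2}g^{ij}\partial_\rho^j g_{ij}|_0 + \frac{m}{f}\partial_\rho^j f|_0$ for $j \leq k$, so the statement for $v_{k,\phi}^m$ follows from the trace statement.

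The main obstacle is the induction in the previous paragraph: naively, iterating the ODE for $\Phi$ brings in arbitrarily many $x$-derivatives of the initial datum $T(\cdot, 0) = e^{-\omega}$, and one must exploit the specific algebraic structure of the straight and normal system to see that these higher $\omega$-derivatives either cancel or combine into terms of order at most $\nabla^2\omega$. The weighted setting introduces the additional difficulty that the equation for $T$ is coupled to the $\tilde f$-equation; controlling this coupled system, and verifying that the interaction between $f$-derivatives and $\omega$-derivatives preserves the two-derivatives bound, is the delicate part not present in~\cite{Graham2009}.
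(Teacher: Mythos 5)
There is a genuine gap, and it sits exactly where you flag it yourself: the ``careful induction'' showing that higher derivatives of $\omega$ cancel is not a routine verification to be deferred---it is precisely the content of the theorem, and your proposal supplies no mechanism for it. If you track the diffeomorphism $\Phi$ relating the two straight normal forms, its Taylor coefficients genuinely do contain derivatives of $\omega$ of order growing with $k$: already $\partial_\rho X\vert_{\rho=0}\sim\nabla\omega$, and each further $\rho$-derivative of your ODE system differentiates the lower-order coefficients in $x$, so $\partial_\rho^k(T,X,P)\vert_{\rho=0}$ a priori involves $\nabla^{k}\omega$-type terms (consistent with the naive count that $\partial_\rho^k g\vert_{\rho=0}$ is a natural tensor of order $2k$ and so could transform with up to $2k$ derivatives of $\omega$). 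These high-order terms do cancel in the final transformation laws, but no direct proof of this cancellation by tracking the normal-form diffeomorphism is known even in the unweighted case; indeed Graham introduced the extended obstruction tensors in~\cite{Graham2009} exactly because the two-derivative dependence is invisible from the diffeomorphism description. Your appeal to ``the approach of Graham'' is therefore misdirected: Graham's actual argument is not the ODE analysis you sketch, and your closing paragraph, which concedes that the crucial cancellation must ``either cancel or combine'' by some unexploited algebraic structure, is an acknowledgment that the proof is incomplete at its central step.

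The paper bypasses the cancellation problem entirely, following Graham's true strategy. It first proves \cref{linear-expansion-theorem}: $\partial_\rho^k g$, $\partial_\rho^k f$, the trace combination $\frac{1}{2}g^{ij}\partial^k g_{ij}+\frac{m}{f}\partial^k f$, and $v_{k,\phi}^m$ are linear combinations of contractions of $Y_\phi^m$, $P_\phi^m$, and the weighted extended obstruction tensors $(\Omega_\phi^m)^{(l)}$, established via the recursions \cref{lambda-derivative,g-double-prime,g-inverse,f-double-prime}. Then the theorem is immediate from the transformation laws of the building blocks: the formulas \cref{transformation-laws} for $J_\phi^m$, $P_\phi^m$, $Y_\phi^m$ involve at most $\nabla^2\omega$, and \cref{curvature-transformation-law} shows that $(\Omega_\phi^m)^{(l)}$ transforms through the matrix $p_I^A$ of \cref{conformal-change-matrix}, which contains only $\omega$ and $\omega_i$, i.e.\ first derivatives. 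If you want to rescue your route, you would need either to prove the cancellation in the $\Phi$-expansion directly---an open-ended combinatorial problem, aggravated in the weighted setting by the coupling to the $\widetilde f$-equation that you correctly identify---or, more realistically, to restructure the argument around an analogue of \cref{linear-expansion-theorem}, which is what the paper does. Your final step, recovering the statement for $v_{k,\phi}^m$ from the trace quantities via logarithmic differentiation of \cref{volume-expanding}, is sound and matches the paper's observation, but it rests on the unproven core.
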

We prove \cref{first-theorem-wrvc} by showing that the coefficients of $(\widetilde{g},\widetilde{f})$ and $ v_{k,\phi}^m $ can be written as homogeneous polynomials in $P_\phi^m$, $Y_\phi^m$, and weighted extended obstruction tensors $(\Omega_\phi^m)^{(k)}$.

\begin{theorem}
\label{linear-expansion-theorem}
Let $k\geq 1$, $k<(n+m)/2$ for $n+m\in 2\mathbb{N}$.
There exist linear combinations $\mathcal{Q}$ and $\mathcal{S}$ of partial contractions of $Y_\phi^m, P_\phi^m$ and $(\Omega_\phi^m)^{(k)}$ with respect to $(g_\rho,f_\rho)$ such that
\begin{align*}\partial^k_{\rho}g_{ij}&=\mathcal{Q}(P_\phi^m,(\Omega_\phi^m)^{(1)}_{ij},\dots,(\Omega_\phi^m)^{(k-1)}_{ij}),\\
\partial^k_{\rho}f&=\mathcal{S}(Y_\phi^m,P_\phi^m,(\Omega_\phi^m)^{(1)}_{ij},\dots,(\Omega_\phi^m)^{(k-1)}_{ij}).\end{align*}
Similarly, for $k\geq 1$, $k\leq (n+m)/2$ for $n+m\in 2\mathbb{N}$, there exist linear combinations $\mathcal{T}$ and $\mathcal{V}_k$ of complete contractions of $Y_\phi^m, P_\phi^m$ and $(\Omega_\phi^m)^{(k)}$ with respect to $(g_\rho,f_\rho)$ such that
\begin{equation}
\begin{aligned}
\label{v-equation}
\frac{1}{2}g^{ij}\partial^{k} g_{ij}+\frac{m}{f}\partial^{k} f&=\mathcal{T}(Y_\phi^m,P_\phi^m,(\Omega_\phi^m)^{(1)}_{ij},\dots,(\Omega_\phi^m)^{(k-2)}_{ij}),\\
v_{k,\phi}^m &=\mathcal{V}_{k}(Y_\phi^m, P_\phi^m, (\Omega_\phi^m)^{(1)}, \ldots, (\Omega_\phi^m)^{(k-2)})
\end{aligned}
\end{equation}
\end{theorem}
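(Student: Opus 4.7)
The plan is to proceed by induction on $k$, exploiting the Taylor expansion in $\rho$ of $\widetilde{\mathrm{Ric}}_\phi^m$ and $\widetilde{F}_\phi^m$ for a straight normal ambient measure structure of the form~\cref{straight-normal-metric}. First I would compute the tangential components $(\widetilde{\mathrm{Ric}}_\phi^m)_{ij}$ and the scalar $\widetilde{F}_\phi^m$ in terms of $g_\rho$, $f_\rho$, and their $\rho$-derivatives up to order two. Because $(\widetilde{g},\widetilde{f})$ is a weighted ambient structure, both quantities vanish modulo the appropriate order in $\rho$; extracting the coefficient of $\rho^{k-1}$ from $(\widetilde{\mathrm{Ric}}_\phi^m)_{ij}=0$ should yield an identity of the schematic form
\begin{equation*}
(n+m-2k)\,\partial_\rho^k g_{ij}\big|_{\rho=0} = c_k\,(\Omega_\phi^m)^{(k-1)}_{ij} + \mathcal{R}_k,
\end{equation*}
where the leading obstruction term appears because $\widetilde{R}_{\infty ij\infty,\underbrace{\scriptstyle \infty\dots\infty}_{k-1}}\big|_{\rho=0}=(\Omega_\phi^m)^{(k-1)}_{ij}$ by \cref{definition-weighted-extended-obstruction-tensors}, the factor $(n+m-2k)$ is the familiar scaling arising from the $t^2g_\rho$ part of the ambient metric, and $\mathcal{R}_k$ is polynomial in $P_\phi^m$ and in $\partial_\rho^j g,\partial_\rho^j f$ for $j<k$. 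An analogous recursion for $\partial_\rho^k f$ would come from the $\rho^{k-1}$-coefficient of $\widetilde{F}_\phi^m=0$, with $Y_\phi^m$ also present.

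The base case $k=1$ is already available from~\cite{CaseKhaitan2022}: evaluating at $\rho=0$ gives $\partial_\rho g_{ij}|_{\rho=0}=-2(P_\phi^m)_{ij}$ together with the corresponding formula for $\partial_\rho f|_{\rho=0}$ in $Y_\phi^m$ and $P_\phi^m$. For the inductive step, assuming the claim for all $j<k$, the recursion displays $\partial_\rho^k g_{ij}|_{\rho=0}$ as a multiple of $(\Omega_\phi^m)^{(k-1)}_{ij}$ plus a partial contraction whose constituents are, by hypothesis, polynomials in $P_\phi^m$ and $(\Omega_\phi^m)^{(i)}$ for $i\leq k-2$. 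This establishes the formula for $\mathcal{Q}$; the argument for $\mathcal{S}$ is identical once $Y_\phi^m$ is permitted.

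For the identities involving $\mathcal{T}$ and $\mathcal{V}_k$ the strategy is to exploit the improved trace condition in item (ii),
\begin{equation*}
g^{ij}(\widetilde{\mathrm{Ric}}_\phi^m)_{ij} - m f^{-2}\widetilde{F}_\phi^m = O\bigl(\rho^{(n+m)/2}\bigr),
\end{equation*}
which also holds to infinite order when $n+m\notin 2\mathbb{N}$. Taking its $\rho^{k-1}$-coefficient produces a cancellation of the $(\Omega_\phi^m)^{(k-1)}$ terms between $\tfrac{1}{2}g^{ij}\partial_\rho^k g_{ij}|_{\rho=0}$ and $\tfrac{m}{f}\partial_\rho^k f|_{\rho=0}$, so that the sum is a complete contraction in $P_\phi^m$, $Y_\phi^m$, and $(\Omega_\phi^m)^{(i)}$ for $i\leq k-2$, which is the statement for $\mathcal{T}$. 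The passage to $\mathcal{V}_k$ is then combinatorial: differentiating \cref{volume-expanding} in $\rho$ and applying Fa\`a di Bruno to $\log\bigl[(f_\rho/f)^m(\det g_\rho/\det g)^{1/2}\bigr]$ gives
\begin{equation*}
k!\,v_{k,\phi}^m = \tfrac{1}{2}g^{ij}\partial_\rho^k g_{ij}\big|_{\rho=0} + \tfrac{m}{f}\partial_\rho^k f\big|_{\rho=0} + \mathcal{L}_k,
\end{equation*}
where $\mathcal{L}_k$ is polynomial in $\partial_\rho^j g,\partial_\rho^j f$ for $j<k$, so the expression for $\mathcal{V}_k$ follows by combining the result for $\mathcal{T}$ with the inductive descriptions of $\mathcal{Q}$ and $\mathcal{S}$.

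The hardest part will be verifying the precise cancellation in the trace combination: the coefficient of $(\Omega_\phi^m)^{(k-1)}$ in $\tfrac{1}{2}g^{ij}\partial_\rho^k g_{ij}+\tfrac{m}{f}\partial_\rho^k f$ must vanish identically, which requires carefully matching the constants from the trace of the tangential Ricci recursion against those from the $\widetilde{F}_\phi^m$ recursion, weighted by $-mf^{-2}$. A related subtlety is the borderline even case $k=(n+m)/2$, where the factor $(n+m-2k)$ in the recursion vanishes and the formulas for $\mathcal{Q}$, $\mathcal{S}$ cease to make sense as stated; here the improved vanishing in (ii) is precisely what permits $\mathcal{T}$ and $\mathcal{V}_k$ to remain well-defined at this top order.
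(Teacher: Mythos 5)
Your core inductive step does not work as stated. Extracting the $\rho^{k-1}$-coefficient of the tangential equation $(\widetilde{\mathrm{Ric}}_\phi^m)_{ij}=0$ gives the Fefferman--Graham-type iteration
\begin{equation*}
(n+m-2k)\,\restr{\partial_\rho^k g_{ij}}{\rho=0}=\mathcal{R}_k,
\end{equation*}
but $\mathcal{R}_k$ contains the \emph{tangential} second derivatives of the lower Taylor coefficients (the linearized Ricci operator applied to $\partial_\rho^{k-1}g$, Hessians of $f_\rho$-coefficients, hence covariant derivatives of $P_\phi^m$ and $Y_\phi^m$), and no curvature component $\widetilde{R}_{\infty ij\infty,\infty\cdots\infty}$ appears in it at all: \cref{definition-weighted-extended-obstruction-tensors} does not cause $(\Omega_\phi^m)^{(k-1)}$ to ``appear'' in the tangential Ricci expansion. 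Already at $k=2$ in the unweighted case the iteration reads $(n-4)\,\partial_\rho^2 g\sim B+(P\ast P)$ with $B$ the Bach tensor, which involves $\Delta P$; the identification of $B$ with $(\Omega_\phi^m)^{(1)}$ up to the factor $(4-n)$ is precisely what needs proving and requires the ambient curvature identities, not the Ricci iteration. So your hypothesis that $\mathcal{R}_k$ is polynomial in $P_\phi^m$ and $\partial_\rho^j g,\partial_\rho^j f$ for $j<k$ is false, the induction as described cannot close (derivatives of contractions of $P_\phi^m$ and $(\Omega_\phi^m)^{(j)}$ are not again such contractions), and dividing by $(n+m-2k)$ would in any case plant dimension-dependent denominators degenerating at the critical order. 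The paper avoids all of this by never invoking the tangential Ricci equation: it sets $\Lambda^{(k)}_{ij}:=\widetilde{R}_{\infty ij\infty,\infty\cdots\infty}$ and uses three exact identities for a straight, normal structure --- $\widetilde{R}_{\infty ij\infty}=\tfrac12\big(g''_{ij}-\tfrac12 g^{kl}g'_{ik}g'_{jl}\big)$; the recursion $\partial_\rho\Lambda^{(k)}_{ij}=\Lambda^{(k+1)}_{ij}+g^{lm}g'_{m(i}\Lambda^{(k)}_{j)l}$, coming from $\widetilde{\Gamma}^K_{\infty\infty}=0$ and $\widetilde{\Gamma}^k_{\infty j}=\tfrac12 g^{kl}g'_{lj}$; and $f''=-\tfrac{f}{m}g^{ij}\Lambda^{(1)}_{ij}$ modulo the relevant order, obtained from the vanishing of the single component $(\widetilde{\mathrm{Ric}}_\phi^m)_{\infty\infty}$ --- and inducts on these, giving $\partial_\rho^k g=2\Lambda^{(k-1)}+\cdots$ with universal coefficients and no division.

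Your treatment of the trace combination is likewise off target, and you correctly flag it as the hard point: the constant-matching between the traced tangential equation and the $\widetilde{F_\phi^m}$ equation, via the improved condition (ii), is never needed. In the paper the cancellation is immediate and identical: since $\partial_\rho^k g_{ij}=2\Lambda^{(k-1)}_{ij}+\text{(lower)}$ and $\partial_\rho^k f=-\tfrac{f}{m}g^{ij}\Lambda^{(k-1)}_{ij}+\text{(lower)}$, one has $\tfrac12 g^{ij}\big(2\Lambda^{(k-1)}_{ij}\big)+\tfrac{m}{f}\big(-\tfrac{f}{m}g^{ij}\Lambda^{(k-1)}_{ij}\big)=0$, and the $(\widetilde{\mathrm{Ric}}_\phi^m)_{\infty\infty}$ condition to order $(n+m)/2-1$ suffices for the whole range $k\le (n+m)/2$ because only $k-2$ derivatives of the $f''$ identity are ever taken; the improved combined-trace condition of (ii) enters the uniqueness theory of \cite{CaseKhaitan2022}, not this proof. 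Two smaller points: the base case has $\restr{\partial_\rho g_{ij}}{\rho=0}=+2(P_\phi^m)_{ij}$ and $\restr{\partial_\rho f}{\rho=0}=\tfrac{f}{m}Y_\phi^m$ (consistent with $v_{1,\phi}^m=J_\phi^m$), so your sign is wrong; your Fa\`a di Bruno step relating $k!\,v_{k,\phi}^m$ to $\tfrac12 g^{ij}\partial_\rho^k g_{ij}+\tfrac{m}{f}\partial_\rho^k f$ plus lower-order terms does match the paper's final step.
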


For a smooth metric measure space $(M^n,g,f,m,\mu)$, and given $k\in \mathbb{N}$, the \emph{weighted $\sigma_k$-curvature} $\sigma_{k,\phi}^m(Y_\phi^m, g^{-1}P_\phi^m)$~\cite{Case2014s} is a functional defined on the space of metric measure structures. It is variational for $k\in\{1,2\}$, or when the metric measure space is locally conformally flat. For $m\in \mathbb{N}$, it is equal to $$\sigma_{k,\phi}^m(Y_\phi^m, g^{-1}P_\phi^m)=\sigma_k\Big(\underbrace{\frac{Y_\phi^m}{m},\dots,\frac{Y_\phi^m}{m}}_{m\text{ times}}, \lambda_1,\dots,\lambda_n\Big),$$ where $\sigma_k$ is a symmetric polynomial of degree $k$, and $\lambda_1,\dots,\lambda_n$ are the eigenvalues of $g^{-1}P_\phi^m$.

We relate the conformal transformation laws of $ v_{k,\phi}^m (g)$ and $\sigma^m_k(Y_\phi^m, g^{-1}P_\phi^m)$.  
\begin{theorem}
\label{conformal-change-volume-equation}
Let $k \geq 1$, $k \leq (n+m)/ 2$ for $n+m\in 2\mathbb{N}$. Then
\begin{equation*}
e^{2 k \omega} \widehat{v_{k,\phi}^m}=\widehat{\sigma_{k,\phi}^m}\left(\widehat{Y_{\phi}^m}, g^{-1} \widehat{P_{\phi}^m}\right)+\sum_{p=0}^{k-2} r_{k, p}(x, \nabla \omega,\widehat{Y_{\phi}^m}, \widehat{P_{\phi}^m}),
\end{equation*}
where $r_{k, p}(x, \nabla \omega, \widehat{Y_{\phi}^m}, \widehat{P_{\phi}^m})$ is a polynomial in $\omega_{i}, \widehat{Y_{\phi}^m}, \widehat{P_{\phi}^m}$ which is homogeneous of degree $p$ in $\widehat{Y_{\phi}^m}, \widehat{P_{\phi}^m}$, of degree at most $ 2k-2p-2$ in $\nabla_j \omega$, and with coefficients depending on $(g,f)$.
\end{theorem}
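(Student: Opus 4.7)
The plan is to adapt the strategy of Graham \cite{Graham2009} for the unweighted case to the weighted setting, using \cref{first-theorem-wrvc} and \cref{linear-expansion-theorem}. I proceed in three stages.

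First, by \cref{linear-expansion-theorem}, write
\[
\widehat{v_{k,\phi}^m} = \mathcal{V}_k^{S}(\widehat{Y_\phi^m}, \widehat{P_\phi^m}) + \mathcal{V}_k^{O}(\widehat{Y_\phi^m}, \widehat{P_\phi^m}, \widehat{(\Omega_\phi^m)^{(1)}}, \ldots, \widehat{(\Omega_\phi^m)^{(k-2)}}),
\]
where $\mathcal{V}_k^{S}$ collects the monomials with no weighted extended obstruction tensor and $\mathcal{V}_k^{O}$ contains the rest. To identify $\mathcal{V}_k^{S}$, I specialize to a weighted locally conformally flat metric measure structure, in which every $(\Omega_\phi^m)^{(j)}$ vanishes and the weighted ambient expansion is determined purely by $Y_\phi^m$ and $P_\phi^m$. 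A direct computation of the volume expansion in this model, matching the $\rho$-coefficients of $(f_\rho/f)^m(\det g_\rho / \det g)^{1/2}$ against eigenvalues of the warped Schouten, yields the polynomial identity $\mathcal{V}_k^S \equiv \sigma_{k,\phi}^m$. This generalizes the known fact that $v_k = \sigma_k$ in the unweighted locally conformally flat case.

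Second, I invoke the scaling identity $e^{2k\omega}\,\widehat{\sigma_{k,\phi}^m}(\widehat{Y_\phi^m}, \widehat{g}^{-1}\widehat{P_\phi^m}) = \widehat{\sigma_{k,\phi}^m}(\widehat{Y_\phi^m}, g^{-1}\widehat{P_\phi^m})$, which follows from the homogeneity of $\sigma_k$ viewed through the warped product picture: both the eigenvalues of $\widehat{g}^{-1}\widehat{P_\phi^m}$ and the weighted-Schouten direction $\widehat{Y_\phi^m}/m$ rescale uniformly by $e^{2\omega}$ when the contraction is switched from $\widehat{g}^{-1}$ to $g^{-1}$. This accounts for the leading term on the right-hand side as $e^{2k\omega}\mathcal{V}_k^S(\widehat{Y_\phi^m}, \widehat{P_\phi^m})$.

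Third, I expand $e^{2k\omega}\mathcal{V}_k^O$. Iteratively differentiating the weighted ambient metric expansion from \cite{CaseKhaitan2022} and applying \cref{first-theorem-wrvc}, which guarantees that the conformal transformation of $\partial_\rho^j g$ and $\partial_\rho^j f$ involves at most second derivatives of $\omega$, each $\widehat{(\Omega_\phi^m)^{(j)}}$ can be written as a polynomial in the components of $\nabla\omega$, $\widehat{Y_\phi^m}$, and $\widehat{P_\phi^m}$, the second-derivative terms $\omega_{ij}$ being eliminated via the conformal transformation laws of the weighted Schouten tensor and scalar. Substituting into $\mathcal{V}_k^O$ and collecting by total degree in $\widehat{Y_\phi^m}, \widehat{P_\phi^m}$ produces the sum $\sum_{p=0}^{k-2} r_{k,p}$. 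The claimed bound $2k-2p-2$ on the $\omega$-derivative degree follows by weight-counting: each $(\Omega_\phi^m)^{(j)}$ has weight $2j+2$, each Schouten factor weight $2$, and each $\omega_i$ weight $1$, and every monomial in $\mathcal{V}_k^O$ contains at least one $\Omega$-factor of weight $\geq 4$, leaving at most weight $2k-2p-2$ for the $\nabla\omega$ factors once $p$ Schouten factors are accounted for.

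The main obstacle lies in the third stage: computing the conformal transformation of the weighted extended obstruction tensors within the weighted ambient framework of \cite{CaseKhaitan2022} and carefully tracking the resulting homogeneity. The additional complexity compared with the unweighted case of \cite{Graham2009} stems from the presence of $\widehat{f}$ and the weighted Schouten scalar $Y_\phi^m$, which introduce new terms in the recursion for $(g_\rho, f_\rho)$ and require a joint induction on $k$ and the degree of $\widehat{Y_\phi^m}, \widehat{P_\phi^m}$.
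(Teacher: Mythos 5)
Your overall route is the paper's own: decompose the expression for $v_{k,\phi}^m$ furnished by \cref{linear-expansion-theorem} into a pure Schouten part and an obstruction-containing part, identify the former with $\sigma_{k,\phi}^m$ through the weighted locally conformally flat case via \cref{lcf-lemma} (the paper simply cites \cite{Case2014s} for $v_{k,\phi}^m=\sigma_{k,\phi}^m$ in that case, rather than recomputing the model expansion as you propose), and then bound the $\nabla\omega$-degree of the remainder. But your third stage contains a genuine error of premise. No second derivatives $\omega_{ij}$ appear in the transformation of $\widehat{(\Omega_\phi^m)^{(j)}}$ at all: by \cref{curvature-transformation-law}, $e^{2j\omega}\widehat{(\Omega_\phi^m)^{(j)}}$ equals $(\Omega_\phi^m)^{(j)}$ plus ambient curvature components of the \emph{background} structure $(g,f)$ contracted with entries of the matrix \cref{conformal-change-matrix}, and those entries involve only $\omega_i$ and $\omega_k\omega^k$. (Invoking \cref{first-theorem-wrvc} here is both too weak and circular-in-spirit: it only gives ``at most second order,'' and it is itself deduced from \cref{curvature-transformation-law}.) Your proposed elimination $\nabla^2\omega=\widehat{P_\phi^m}-P_\phi^m-\cdots$ is therefore unnecessary, and it is also harmful: it would inject hatted Schouten factors into the obstruction slots, so the resulting terms would no longer be homogeneous of degree exactly $p$ in $(\widehat{Y_\phi^m},\widehat{P_\phi^m})$, which is part of the assertion. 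In the paper's proof, every hatted Schouten factor comes from the explicit slots of $\mathcal{V}_{k,p}$, while the obstruction slots contribute only powers of $\omega_i$ with coefficients depending on $(g,f)$.

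The second gap is your justification of the crucial per-factor bound, namely that $e^{2l\omega}\widehat{(\Omega_\phi^m)^{(l)}}$ has degree at most $2l$ in $\nabla\omega$. This does not follow from weight counting: the weights you assign are detected by the constant rescalings $(\widehat g,\widehat f)=(s^2g,sf)$, under which $\nabla\omega\equiv 0$, so homogeneity places no constraint whatsoever on the $\nabla\omega$-degree. Indeed, naively the transformation law allows degree as high as $2l+4$ in a single slot (each of the $l+1$ target indices $\infty$ can pick up $-\tfrac12\omega_k\omega^k$ from a source index $0$, and the two $M$-indices an $\omega_i$ each). The true bound is structural — it uses the upper-triangular form of \cref{conformal-change-matrix} together with the identities satisfied by ambient curvature components carrying $0$-indices — and this is exactly the argument of \cite{Graham2009}*{Proposition 2.6} that the paper adapts; once it is in hand, your count $2\sum_{l\geq 1} l\,d_l=2\bigl(k-p-\sum_{l\geq 1} d_l\bigr)\leq 2k-2p-2$ is the paper's. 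Relatedly, the weight identity you assume (the paper's \cref{k-value}, i.e., $Y_\phi^m,P_\phi^m$ of weight $2$ and $(\Omega_\phi^m)^{(l)}$, with its contraction, of weight $2l+2$) itself requires the short diffeomorphism argument $\widehat t=s^{-1}t$, $\widehat x = x$, $\widehat\rho=s^2\rho$ giving $\widehat{v_{k,\phi}^m}=s^{-2k}v_{k,\phi}^m$; it also yields $\mathcal{V}_{k,k-1}=0$, so the sum genuinely stops at $p=k-2$. Finally, a slip in your second stage: $\widehat{Y_\phi^m}$ is a scalar and does not rescale when the contraction is switched from $\widehat g^{-1}$ to $g^{-1}$; the correct statement, as in the paper, is $\widehat{\sigma_{k,\phi}^m}(\widehat{Y_\phi^m},\widehat g^{-1}\widehat{P_\phi^m})=e^{-2k\omega}\,\widehat{\sigma_{k,\phi}^m}(e^{2\omega}\widehat{Y_\phi^m}, g^{-1}\widehat{P_\phi^m})$, with the $Y$-slot rescaled by hand.
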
Hence, the conformal transformation laws of $ v_{k,\phi}^m (g)$ and $\sigma^m_k(Y_\phi^m, g^{-1}P_\phi^m)$ are the same up to lower order terms.

Let $\mathcal{C}$ be the conformal class of the metric measure structure $(g,f)$. Then $\mathcal{C}_1\subset \mathcal{C}$ is the set of smooth metric measure structures with unit weighted volume. In order to study the critical points of the functional $ \mathcal{F}_{k,\phi}^m $ in $\mathcal{C}_1$, we first write down the variational formula for $ v_{k,\phi}^m $. 
\begin{theorem}
\label{infinitesimal-conformal-change-theorem}
Let $k \geq 1$, $k \leq (n+m)/2$ for $n+m\in 2\mathbb{N}$. The infinitesimal conformal variation of $v_{k,\phi}^m$ is
\begin{equation}
\label{infinitesimal-conformal-change}
\delta v_{k,\phi}^m =-2 \omega k v_{k,\phi}^m +\nabla^*_i[(L_{k,\phi}^m)^{ij}\nabla_j\omega],  
\end{equation} where $(L_{k,\phi}^m)^{ij}=-\frac{1}{k !} \restr{\partial_{\rho}^{k}}{{\rho=0}}\big[(v_\phi^m)\int_0^\rho g^{ij}(u) \,du\big]$ and $-\nabla^*$ is the adjoint of $\nabla$ with respect to the $L^2$-inner product induced by the weighted volume element~\cite{Case2014s}.
\end{theorem}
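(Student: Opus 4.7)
The plan is to realize the infinitesimal conformal change as an infinitesimal diffeomorphism of the weighted ambient space that preserves the straight and normal form, following the strategy of Graham in the unweighted case~\cite{Graham2009}. Consider the one-parameter family of conformal changes $(g_s,f_s):=(e^{2s\omega}g,e^{s\omega}f)$ with associated straight and normal weighted ambient metric measure structures $(\widetilde{g}_s,\widetilde{f}_s)$. The uniqueness statement in~\cite{CaseKhaitan2022} provides a family of diffeomorphisms $\Phi_s$ of $\mathbb{R}_+\times M\times(-\epsilon,\epsilon)$ with $\Phi_0=\mathrm{id}$ satisfying $\Phi_s^*\widetilde{g}_s\equiv\widetilde{g}_0$ and $\Phi_s^*\widetilde{f}_s\equiv\widetilde{f}_0$ to the allowed order. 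The scale-invariance of the straight form forces the infinitesimal generator to take the form
\[
X:=\restr{\frac{d}{ds}}{s=0}\Phi_s=-\omega\,t\partial_t+\beta(x,\rho)\partial_\rho+\gamma^i(x,\rho)\partial_i,
\]
with $\beta$ and $\gamma^i$ independent of $t$ and vanishing at $\rho=0$.

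Next, I would determine $\beta$ and $\gamma^i$ from the Lie derivative equations $\mathcal{L}_X\widetilde{g}=-\restr{\partial_s}{s=0}\widetilde{g}_s$ and $\mathcal{L}_X\widetilde{f}=-\restr{\partial_s}{s=0}\widetilde{f}_s$. The $(t,t)$-component of the metric equation pins down $\beta=2\omega\rho$, while the $(i,\rho)$-component yields the ODE $(g_\rho)_{ij}\partial_\rho\gamma^j=\nabla_i\omega$, whose unique solution with $\gamma^j(x,0)=0$ is
\[
\gamma^j(x,\rho)=\left(\int_0^\rho g^{ij}(u)\,du\right)\nabla_i\omega.
\]
This is precisely the origin of the expression $\int_0^\rho g^{ij}(u)\,du$ in $(L_{k,\phi}^m)^{ij}$. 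The remaining components of the $\widetilde{g}$-equation together with the $\widetilde{f}$-equation must be checked for consistency, and this is where the specifically weighted features of the argument enter.

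The third step exploits the invariance of the weighted ambient volume form $\widetilde{f}^m\,\mathrm{dvol}_{\widetilde{g}}$ under $\Phi_s$. In straight and normal coordinates this volume form equals $t^{n+m+1}f_\rho^m\sqrt{\det g_\rho}\,dt\wedge d\rho\wedge dx$, so the $\Phi_s$-invariance becomes a pointwise identity between $\widehat{v_\phi^m}$ at the image point and $v_\phi^m$ at the source, involving the Jacobian of $\Phi_s$ and the scaling factor $h(x,\rho)$ in $\widehat{t}=ht$ (with $h|_{\rho=0}=e^{-s\omega}$). Linearizing in $s$ using the explicit formulas for $\beta,\gamma^i,h$ and rewriting the divergence using the weighted adjoint $\nabla_i^*$ from~\cite{Case2014s} gives
\[
\delta v_\phi^m=-2\omega\rho\,\partial_\rho v_\phi^m-\nabla_i^*\bigl[v_\phi^m\,\gamma^i\bigr].
\]
Extracting the $\rho^k$-coefficient, the first term contributes $-2k\omega v_{k,\phi}^m$ and the second contributes $\nabla_i^*[(L_{k,\phi}^m)^{ij}\nabla_j\omega]$ with $(L_{k,\phi}^m)^{ij}$ as stated.

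The main obstacle is the second step, where the Lie derivative equations must be solved explicitly in the weighted setting: the $\widetilde{g}$- and $\widetilde{f}$-equations must be matched simultaneously, which is more delicate than the unweighted case of~\cite{Graham2009}. An additional subtlety in the borderline case $n+m\in 2\mathbb{N}$ and $k=(n+m)/2$ is that $\Phi_s$ is only determined modulo $O(\rho^{(n+m)/2})$; here one invokes the improved uniqueness of $\tfrac{1}{2}g^{kl}(g_\rho)_{kl}+\tfrac{m}{f}f_\rho$ modulo $O(\rho^{(n+m)/2+1})$ from~\cite{CaseKhaitan2022} to ensure that $\delta v_{(n+m)/2,\phi}^m$ is well-defined, matching the hypothesis of the theorem.
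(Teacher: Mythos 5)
Your argument is correct, and its skeleton is the same as the paper's---realize the curve $(e^{2s\omega}g,e^{s\omega}f)$ by a one-parameter family of diffeomorphisms of the distinguished model, read off the infinitesimal generator, and extract \cref{infinitesimal-conformal-change} from the resulting divergence identity---but you execute it in the ambient picture where the paper works in the Poincar\'e picture. The paper introduces the weighted Poincar\'e space in normal form $\bigl(\frac{dr^2+g_r}{r^2},\frac{f_r}{r}\bigr)$, quotes $X^0=-\omega r$ and $X^i=\omega_j\int_0^r s\,g_s^{ij}\,ds$ from the proof of \cite{Graham2009}*{Theorem 3.1}, supplements them with the genuinely new weighted equation $L_X(f_r/r)=\delta f_r/r$ to obtain $\delta f_r$, and then computes $\delta\log v_\phi^m$ term by term, recombining the extra term $m\,v_\phi^m Y^i\partial_i(\log f_0)$ with the ordinary divergence into $\nabla^*_i$ via $\nabla^*=\nabla-\nabla\phi$. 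You instead solve the Lie-derivative equations $\mathcal{L}_X\widetilde g=-\restr{\partial_s}{s=0}\widetilde g_s$ directly on the straight and normal ambient space---your $\beta=2\omega\rho$ and $\gamma^j=\bigl(\int_0^\rho g^{ij}(u)\,du\bigr)\nabla_i\omega$ are correct, and they are exactly the paper's $X^0,X^i$ under the substitution $\rho=-\tfrac12 r^2$ together with your opposite pull-back convention, which accounts for $\gamma^i=-Y^i$---and you package the final step as invariance of $\widetilde f^{\,m}\mathrm{dvol}_{\widetilde g}=t^{n+m+1}f_\rho^m\sqrt{\det g_\rho}\,dt\wedge d\rho\wedge dx$, which produces the weighted adjoint $\nabla^*_i$ in one stroke rather than by the paper's two-step rewriting. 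What your route buys: it is self-contained (no appeal to Graham's vector-field formulas), it bypasses the weighted Poincar\'e space entirely, and the volume-form packaging makes the appearance of $\nabla^*$ structural rather than computational; your closing remark on the borderline case $k=(n+m)/2$, invoking the determinacy of $\tfrac12 g^{kl}(g_\rho)_{kl}+\tfrac{m}{f}f_\rho$ modulo $O(\rho^{(n+m)/2+1})$ from \cite{CaseKhaitan2022}, also makes explicit a point the paper's proof leaves implicit. Conversely, the paper's route gets the generator for free from the cited reference and isolates the only new weighted ingredient, $\delta f_r$; note too that your worry that matching the $\widetilde f$-equation is ``delicate'' is unfounded, since $\widetilde f_s=t\,(f_s)_\rho$ means that equation imposes no constraint on $X$ but simply outputs $\delta f_\rho$, exactly as in the paper.
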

\cref{infinitesimal-conformal-change} allows us to study the critical points of $ \mathcal{F}_{k,\phi}^m $ in $\mathcal{C}_1$.
\begin{theorem}
\label{first-derivative-fphim}
Let $k\in \mathbb{N}$ be such that $k\neq \frac{n+m}{2}$. Then the critical points of $ \mathcal{F}_{k,\phi}^m $ in $\mathcal{C}_1$ are those for which $ v_{k,\phi}^m $ are constant.
\end{theorem}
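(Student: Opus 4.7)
The plan is to compute the first variation of $\mathcal{F}_{k,\phi}^m$ along admissible conformal directions in $\mathcal{C}_1$ using \cref{infinitesimal-conformal-change-theorem} and then invoke a standard Lagrange-multiplier argument.

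First I would record the behavior of the weighted volume form. Since $\mathrm{dvol}_\phi=f^m\,\mathrm{dvol}_g$ and $(\widehat{g},\widehat{f})=(e^{2\omega}g,e^{\omega}f)$, one has $\widehat{\mathrm{dvol}_\phi}=e^{(n+m)\omega}\,\mathrm{dvol}_\phi$, hence $\delta\,\mathrm{dvol}_\phi=(n+m)\omega\,\mathrm{dvol}_\phi$ at the infinitesimal level. The constraint of staying in $\mathcal{C}_1$ then reads precisely $\int_M\omega\,\mathrm{dvol}_\phi=0$, and no further linear constraint arises since the conformal change of $(g,f)$ is parametrized by the single function $\omega$.

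Combining this with \cref{infinitesimal-conformal-change-theorem} yields
\begin{align*}
\delta\mathcal{F}_{k,\phi}^m
&=\int_M\bigl(\delta v_{k,\phi}^m+(n+m)\omega\, v_{k,\phi}^m\bigr)\mathrm{dvol}_\phi\\
&=(n+m-2k)\int_M\omega\, v_{k,\phi}^m\,\mathrm{dvol}_\phi+\int_M\nabla^*_i\bigl[(L_{k,\phi}^m)^{ij}\nabla_j\omega\bigr]\mathrm{dvol}_\phi.
\end{align*}
The divergence term vanishes because $-\nabla^*$ is the formal adjoint of $\nabla$ with respect to the weighted $L^2$-pairing, and pairing the divergence of a vector field against the constant $1$ produces zero. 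Consequently, the critical-point condition on $\mathcal{C}_1$ becomes
$(n+m-2k)\int_M\omega\, v_{k,\phi}^m\,\mathrm{dvol}_\phi=0$
for every $\omega$ with $\int_M\omega\,\mathrm{dvol}_\phi=0$. Since $k\neq(n+m)/2$, the coefficient $(n+m-2k)$ is nonzero, so $v_{k,\phi}^m$ is weighted $L^2$-orthogonal to every mean-zero function and must therefore be constant. Conversely, if $v_{k,\phi}^m$ is constant, both terms above vanish for all admissible $\omega$, and $(g,f)$ is critical.

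The main obstacle here is essentially bookkeeping: one must verify that the adjoint $\nabla^*$ in \cref{infinitesimal-conformal-change-theorem} is indeed taken with respect to $\mathrm{dvol}_\phi$, so that the integration by parts against $1$ is exact and no weighted correction survives. All substantive analytic content is already packaged into \cref{infinitesimal-conformal-change-theorem}, which reduces the present theorem to this short, essentially algebraic argument.
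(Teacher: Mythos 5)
Your proposal is correct and follows essentially the same route as the paper: both derive $\delta\mathcal{F}_{k,\phi}^m=(n+m-2k)\int_M \omega\, v_{k,\phi}^m\,\mathrm{dvol}_\phi$ from \cref{infinitesimal-conformal-change-theorem} together with the vanishing of the weighted divergence term (the paper's \cref{variational-integral}), and then conclude constancy of $v_{k,\phi}^m$ when $k\neq(n+m)/2$. The only cosmetic difference is that you restrict directly to mean-zero variations in $\mathcal{T}_{(g,f)}\mathcal{C}_1$ and use $L^2$-orthogonality, whereas the paper phrases the same constrained-criticality argument via a Lagrange multiplier for the weighted volume functional.
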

\begin{definition}
\label{quasi-einstein-definition}
A smooth metric measure space $(M^n,g,f,m,\mu)$ is quasi-Einstein if 
\begin{equation}
\label{quasi-Einstein-conditions}
P_\phi^m=\lambda g,\quad J_\phi^m=(n+m)\lambda,
\end{equation}
for some $\lambda\in\mathbb{R}$.
\end{definition}
Quasi-Einstein spaces with $J_\phi^m\neq 0$ are local extrema of $\mathcal{F}_{k,\phi}^m$.
\begin{theorem}
\label{double-derivative-theorem}
For $1\leq k<d+m$, $k\neq \frac{n+m}{2}$ for $n+m\in 2\mathbb{N}$, let $(M^n,g,f,m,\mu)$ be a compact, connected quasi-Einstein space with a weighted volume of one such that $n+m\geq 3$ and $J_\phi^m\neq 0$. For $m=0$, we also assume that the space is not isometric to $S^n$ with the standard metric. Then the second variation of $\restr{ \mathcal{F}_{k,\phi}^m }{{\mathcal{C}_1}}$ is a definite quadratic form on $T_g\mathcal{C}_1$. More precisely:
\begin{enumerate}[label=(\roman*)]
\item If $1\leq k<\frac{n+m}{2}$, then
\begin{enumerate}
    \item if $J_\phi^m>0$, then $(\restr{ \mathcal{F}_{k,\phi}^m }{{\mathcal{C}_1}})''$ is positive definite,
    \item if $J_\phi^m<0$, then $(\restr{ \mathcal{F}_{k,\phi}^m }{{\mathcal{C}_1}})''$ is positive definite for $k$ odd and negative definite for $k$ even.
\end{enumerate}
\item If $\frac{n+m}{2}<k<n+m$, then
\begin{enumerate}
     \item if $J_\phi^m>0$, then $(\restr{ \mathcal{F}_{k,\phi}^m }{{\mathcal{C}_1}})''$ is negative definite,
    \item if $J_\phi^m<0$, then $(\restr{ \mathcal{F}_{k,\phi}^m }{{\mathcal{C}_1}})''$ is negative definite for $k$ odd and positive definite for $k$ even.
\end{enumerate}
\end{enumerate}
\end{theorem}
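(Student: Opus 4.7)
The strategy parallels the unweighted Chang--Fang--Graham argument~\cite{ChangFangGraham2012}, transplanted to the weighted setting via the machinery of the previous theorems.

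\emph{Step 1: Simplify at quasi-Einstein.} From $P_\phi^m=\lambda g$ and $J_\phi^m=(n+m)\lambda$ one has $Y_\phi^m/m=\lambda$ and every eigenvalue of $g^{-1}P_\phi^m$ equal to $\lambda$. I expect the weighted ambient series over a quasi-Einstein base to admit a closed form in $\rho$ (essentially $g_\rho=(1-2\lambda\rho)g$ and $f_\rho=f\sqrt{1-2\lambda\rho}$, up to the sign convention for $\rho$), from which every weighted extended obstruction tensor $(\Omega_\phi^m)^{(k)}$ vanishes. By \cref{linear-expansion-theorem}, this forces $v_{j,\phi}^m$ to collapse to the constant $\binom{n+m}{j}\lambda^j$ and $(L_{k,\phi}^m)^{ij}$ to reduce to $\alpha_k\,\lambda^{k-1}g^{ij}$ for an explicit combinatorial factor $\alpha_k=\alpha_k(n,m,k)$ of definite sign (the weighted Newton-transformation coefficient at the totally degenerate eigenvalue configuration).

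\emph{Step 2: Iterate the first variation.} With $g_t=e^{2t\omega}g$, $f_t=e^{t\omega}f$ and $\Psi(t)=\mathcal{F}_{k,\phi}^m(g_t,f_t)$, differentiating twice and using \cref{infinitesimal-conformal-change-theorem} yields
\begin{equation*}
\Psi''(0)=\int_M\delta^2 v_{k,\phi}^m\,d\mathrm{vol}_\phi+2(n+m)\int_M\omega\,\delta v_{k,\phi}^m\,d\mathrm{vol}_\phi+(n+m)^2 v_{k,\phi}^m\int_M\omega^2\,d\mathrm{vol}_\phi.
\end{equation*}
Substituting Step~1, integrating by parts, and subtracting the Lagrange multiplier from the linearized constraint $\int_M\omega\,d\mathrm{vol}_\phi=0$ on $T_g\mathcal{C}_1$, the constrained Hessian collapses to
\begin{equation*}
(\restr{\mathcal{F}_{k,\phi}^m}{\mathcal{C}_1})''(\omega)=\gamma_k\,\lambda^{k-1}\!\int_M\bigl(|\nabla\omega|^2-2(n+m)\lambda\,\omega^2\bigr)\,d\mathrm{vol}_\phi,
\end{equation*}
where $\gamma_k$ is a positive constant times $(n+m-2k)$. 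The prefactor $\gamma_k\lambda^{k-1}$ then realizes exactly the parity pattern of the statement in $k$ and the sign of $J_\phi^m$.

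\emph{Step 3: Eigenvalue estimate and rigidity.} Quasi-Einstein forces $\Ric_\phi^m=2(n+m-1)\lambda g$, so the weighted Lichnerowicz inequality applied to the drift Laplacian $\Delta_\phi$ gives $\lambda_1(\Delta_\phi)\geq 2(n+m)\lambda$ on functions of weighted mean zero when $\lambda>0$, with Bakry--\'Emery--Obata rigidity ruling out equality except on the round $S^n$ at $m=0$ (excluded by hypothesis) and providing no extremal in the genuinely weighted regime $m>0$. The $\lambda<0$ case gives the reverse bound, and feeding the estimate into the quadratic form from Step~2 produces strict definiteness in each of the four subcases stated.

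\emph{Main obstacle.} The crux of the argument is Step~1: making the identity $(L_{k,\phi}^m)^{ij}=\alpha_k\lambda^{k-1}g^{ij}$ and the vanishing of every $(\Omega_\phi^m)^{(k)}$ at a quasi-Einstein background precise. This requires the explicit weighted ambient expansion in closed form over a quasi-Einstein base, together with a weighted Newton-transformation identity relating $(L_{k,\phi}^m)^{ij}$ to the $\sigma_{k-1}$-derivative of $\sigma_{k,\phi}^m$. The secondary difficulty is securing the sharp weighted Lichnerowicz--Obata rigidity across the full range of admissible $(n,m,\mu,k)$ in the hypothesis.
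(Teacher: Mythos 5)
Your outline follows the paper's route step for step: compute the closed-form straight and normal ambient data over a quasi-Einstein base, read off $v_{k,\phi}^m$ and $(L_{k,\phi}^m)^{ij}$, reduce the constrained Hessian to $c_k\lambda^{k-1}\int_M\bigl(|\nabla\omega|_g^2-2(n+m)\lambda\,\omega^2\bigr)\mathrm{dvol}_\phi$ with $c_k=(n+m-2k)\binom{n+m-1}{k-1}$, and close with the weighted Lichnerowicz--Obata estimate of \cref{eigenvalue-lower-bound}, whose equality case forces $m=0$ and the round sphere (excluded by hypothesis, with the $m>0$ extremal ruled out by the contradiction $\Delta f<0$). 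Your Steps 2 and 3 match the paper's: the Hessian is obtained by differentiating $\delta\mathcal{F}_{k,\phi}^m=(n+m-2k)\int_M v_{k,\phi}^m\,\omega\,\mathrm{dvol}_\phi$ once more and cancelling the $(n+m)v\,\omega^2$ term against the Lagrange multiplier. One small simplification: for $\lambda<0$ no eigenvalue estimate (and no ``reverse bound'') is needed, since the integrand $|\nabla\omega|^2-2(n+m)\lambda\omega^2$ is then pointwise positive and the sign is just that of $c_k\lambda^{k-1}$.

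The genuine problem is your Step 1, and it is not a sign-convention issue. By \cite{CaseKhaitan2022}*{Section 7.1}, quoted as \cref{quasi-einstein-ambient} in the paper, the quasi-Einstein ambient data are $g_\rho=(1+\lambda\rho)^2 g$ and $f_\rho=(1+\lambda\rho)f$: a \emph{squared} linear factor for $g_\rho$ and no square root in $f_\rho$. Your ansatz $g_\rho=(1-2\lambda\rho)g$, $f_\rho=f\sqrt{1-2\lambda\rho}$ yields $v_\phi^m(\rho)=(1\mp 2\lambda\rho)^{(n+m)/2}$ and hence $v_{k,\phi}^m=\binom{(n+m)/2}{k}(\mp 2\lambda)^k$, which is inconsistent with the constants $\binom{n+m}{k}\lambda^k$ you then assert; already at $k=2$ one gets $\tfrac{(n+m)(n+m-2)}{2}\lambda^2$ instead of the correct $\tfrac{(n+m)(n+m-1)}{2}\lambda^2$, and the factor $\int_0^\rho g^{ij}(u)\,du$ produces a logarithm rather than the clean $v_\phi^m(\rho)\int_0^\rho g^{ij}(u)\,du=(1+\lambda\rho)^{n+m-1}\rho\,g^{ij}$ from which the paper reads off $(L_{k,\phi}^m)^{ij}=-\binom{n+m-1}{k-1}\lambda^{k-1}g^{ij}$. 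This matters: the exact coefficient $2(n+m)\lambda$ in the final quadratic form, which is precisely the sharp threshold in \cref{eigenvalue-lower-bound}, comes from the identity $2k\binom{n+m}{k}=2(n+m)\binom{n+m-1}{k-1}$; with your closed form neither that coefficient nor the sign-definiteness of $(L_{k,\phi}^m)^{ij}$ follows. The fix is simply to import the correct expansion. Note also that the paper needs neither the vanishing of the $(\Omega_\phi^m)^{(k)}$ nor any weighted Newton-transformation identity (the two items you flag as the crux): both $v_{k,\phi}^m$ and $(L_{k,\phi}^m)^{ij}$ are computed directly from \cref{quasi-einstein-ambient}, so your anticipated main obstacle dissolves once the correct closed form is in hand.
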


This article is organized as follows: in \cref{sec:weighted-extended-obstruction-tensors}, we discuss some properties of weighted extended obstruction tensors and prove \cref{first-theorem-wrvc,linear-expansion-theorem,conformal-change-volume-equation}. In \cref{sec:linearization}, we prove \cref{infinitesimal-conformal-change-theorem}. In \cref{sec:critical-points}, we prove \cref{first-derivative-fphim,double-derivative-theorem}.

\subsection*{Acknowledgements}
I would like to thank Professor Jeffrey S. Case for suggesting this problem, many helpful discussions and suggestions, and comments on multiple drafts of this paper.

\section{Weighted extended obstruction tensors}
\label{sec:weighted-extended-obstruction-tensors}
Given a weighted ambient space $\mathbb{R}_+\times M^n\times (-\epsilon,\epsilon)$, we use $0,\{i,j,k\}$ and $\infty$ as abstract indices for coordinates on $\mathbb{R}_+$, $M$, and $(-\epsilon,\epsilon)$ respectively.

First, we show that weighted extended obstruction tensors vanish for weighted locally conformally flat spaces.

\begin{lemma}
\label{lcf-lemma}
For a weighted locally conformally flat space, $(\Omega_\phi^m)^{(l)}=0$.
\end{lemma}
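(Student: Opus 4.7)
The plan is to adapt the classical strategy: reduce to a weighted flat representative of the conformal class, show the weighted ambient of that representative is itself flat, and then invoke the conformal equivariance of the weighted ambient construction from \cite{CaseKhaitan2022}.

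First I would unpack the hypothesis. Weighted local conformal flatness should mean that around each point of $M$ there exists a conformal factor $e^{2\omega}$ so that the rescaled metric measure structure $(\widehat{g},\widehat{f})=(e^{2\omega}g,e^{\omega}f)$ is ``weighted flat'' in the appropriate sense: the weighted Weyl tensor, weighted Schouten tensor $\widehat{P_\phi^m}$, and $\widehat{Y_\phi^m}$ all vanish, so that every weighted curvature invariant of $(\widehat{g},\widehat{f})$ built out of Schouten plus Weyl data is trivial.

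Next, I would argue that the unique straight and normal weighted ambient metric measure associated to such a weighted flat $(\widehat{g},\widehat{f})$ is itself flat. The recursive construction of \cite{CaseKhaitan2022} determines the Taylor coefficients $\partial_{\rho}^{k}g_{\rho}|_{\rho=0}$ and $\partial_{\rho}^{k}f_{\rho}|_{\rho=0}$ from quantities built polynomially out of $\widehat{P_\phi^m}$, $\widehat{Y_\phi^m}$ and lower order curvature data, driven by the ambient equations $\widetilde{\mathrm{Ric}}_\phi^m=0$ and $\widetilde{F_\phi^m}=0$. When all these data vanish, the trivial ansatz $g_{\rho}\equiv\widehat{g}$, $f_{\rho}\equiv\widehat{f}$ solves each stage of the recursion, and the uniqueness statement of \cite{CaseKhaitan2022} forces this to be the weighted ambient metric measure. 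Consequently $\widetilde{R}\equiv 0$ on a neighborhood of $\rho=0$, so every component $\widetilde{R}_{\infty ij\infty,\infty\dots\infty}|_{\rho=0}$ vanishes and $(\Omega_\phi^m)^{(l)}_{ij}=0$ for the flat representative.

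Finally, I would transfer this vanishing back to $(g,f)$. Since the weighted ambient construction is canonical up to diffeomorphism, a conformal change of the base lifts to an ambient diffeomorphism preserving $\{\rho=0\}$; hence the tensorial quantity $(\Omega_\phi^m)^{(l)}_{ij}$ is an invariant of the weighted conformal class rather than of the chosen representative. Vanishing on the flat representative thus propagates to $(g,f)$, proving the lemma.

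The main obstacle I anticipate is the middle step: carefully verifying that the coupled ambient equations $\widetilde{\mathrm{Ric}}_\phi^m=0$ and $\widetilde{F_\phi^m}=0$ are genuinely compatible with $g_{\rho}\equiv\widehat{g}$, $f_{\rho}\equiv\widehat{f}$ for a weighted flat base. The presence of the auxiliary parameter $\mu$ and the weighted term $(m-1)(|\widetilde{\nabla}\widetilde{f}|^{2}-\mu)$ in $\widetilde{F_\phi^m}$ introduces coupling between $g_\rho$ and $f_\rho$ at every order that has to be disentangled, using crucially that the flat representative has $\widehat{g}$ flat, $\widehat{P_\phi^m}=0$, and $\widehat{Y_\phi^m}=0$ simultaneously. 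Once this check is complete, the passage to a flat representative and the conformal equivariance step are formal consequences of the machinery already developed in \cite{CaseKhaitan2022}.
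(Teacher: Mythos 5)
Your route is at bottom the same as the paper's: the paper's entire proof consists of citing \cite{CaseKhaitan2022}*{Proposition 7.2} for the statement that the weighted ambient space of a weighted locally conformally flat space is flat, from which every component $\widetilde{R}_{\infty ij\infty,\infty\cdots\infty}$, and hence every $(\Omega_\phi^m)^{(l)}$, vanishes. What you have done in addition is sketch a proof of that cited proposition. That sketch is sound in outline: for a weighted flat representative one has $\widehat{g}$ flat, $\nabla^2\widehat{f}=0$, and $|\nabla\widehat{f}|^2=\mu$ (the latter two follow, for $m\neq 0,1$, from $\widehat{P_\phi^m}=0$ and $\widehat{Y_\phi^m}=0$ together with flatness of $\widehat{g}$), so that $\widetilde{g}=2\rho\,dt^2+2t\,dt\,d\rho+t^2\widehat{g}$ is the classical flat ambient metric, $\widetilde{f}=t\widehat{f}$ is linear in flat ambient coordinates, and the trivial ansatz satisfies the weighted ambient equations exactly; uniqueness then identifies it with the straight and normal ambient metric measure.

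One assertion in your last step is, however, false as stated: $(\Omega_\phi^m)^{(l)}$ is \emph{not} an invariant of the weighted conformal class. The paper itself derives the nontrivial transformation law \cref{obstruction-transformation-law}, namely $e^{2k\omega}(\widehat{\Omega_\phi^m})^{(k)}_{ij}=(\Omega_\phi^m)^{(k)}_{ij}-\omega^l C^{(k)}_{ijl}+O(|\nabla\omega|^2)$; the ambient diffeomorphism induced by a conformal change preserves $\{\rho=0\}$ but not the splitting of directions, which is exactly why the matrix $p_I^A$ of \cref{conformal-change-matrix} mixes the $\infty ij\infty\cdots\infty$ component with other curvature components. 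Your conclusion nevertheless survives, for a slightly different reason: what is conformally invariant (up to diffeomorphism, and up to the order ambiguity when $n+m\in 2\mathbb{N}$) is the ambient metric measure structure itself, so flatness of the ambient transfers across the conformal class; and since then \emph{all} components of the flat representative's ambient curvature vanish, every term on the right-hand side of the transformation law in \cref{curvature-transformation-law} vanishes, giving $(\Omega_\phi^m)^{(l)}=0$ for the original $(g,f)$ even though $(\Omega_\phi^m)^{(l)}$ alone is not invariant. You should also record the even-dimensional caveat you elide: for $n+m\in 2\mathbb{N}$ the straight and normal ambient is unique only modulo $O(\rho^{(n+m)/2})$, so in general flatness can only be asserted to the corresponding finite order; this suffices here because \cref{definition-weighted-extended-obstruction-tensors} only defines $(\Omega_\phi^m)^{(l)}$ for $l<(n+m)/2-1$ (and in the weighted locally conformally flat case the exactly flat ambient realizes the construction anyway).
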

\begin{proof}
For a weighted locally conformally flat manifold, the weighted ambient space is flat~\cite{CaseKhaitan2022}*{Proposition 7.2}. Hence, $\Omega^{(l)}=0$.
\end{proof}

In general, the following transformation law holds for weighted ambient curvature tensors and their covariant derivatives.
\begin{proposition}
\label{curvature-transformation-law}
For $(M^n,m,\mu)$, let $(\widehat{g},\widehat{f})=(e^{2 \omega} g,e^\omega f)$ be a metric measure structure in the conformal class $[g,f]$. Also, let $IJKLM_{1} \cdots M_{r}$ be a list of indices, $s_{0}$ of which are $0, s_{M}$ of which are coordinates on $M$, and $s_{\infty}$ of which are $\infty .$ If $n+m\in 2\mathbb{N}$, assume that $s_{M}+2 s_{\infty} \leq n+m+1$. Then the conformal curvature tensors satisfy the conformal transformation law
\begin{equation*}
\restr{e^{2\left(s_{\infty}-1\right) \omega} \widehat{\widetilde{R}}_{I J K L, M_{1} \cdots M_{r}}}{\widehat{\rho}=0, \widehat{t}=1}=\restr{\widetilde{R}_{A B C D, F_{1} \cdots F_{r}}}{{\rho=0, t=1}} p_{I}^{A} \cdots p^{F_{r}}{ }_{M_{r}},
\end{equation*}
where $p_{I}^{A}$ is the matrix
\begin{equation}
\label{conformal-change-matrix}
p_{I}^{A}=\left(\begin{array}{ccc}
1 & \omega_{i} & -\frac{1}{2} \omega_{k} \omega^{k} \\
0 & \delta_{i}^{a} & -\omega^{a} \\
0 & 0 & 1
\end{array}\right).
\end{equation}
\end{proposition}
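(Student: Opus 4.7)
The plan is to invoke naturality of the ambient Riemann tensor and its covariant derivatives under an explicit local diffeomorphism $\Phi$ that intertwines the two weighted ambient spaces associated to $(g,f)$ and $(\widehat{g},\widehat{f})$. By the uniqueness (up to the stated error) of the straight and normal weighted ambient structure established in \cite{CaseKhaitan2022}, different representatives in a conformal class produce ambient structures that are related by such a $\Phi$; the matrix $p_{I}^{A}$ in \cref{conformal-change-matrix} will then be precisely the Jacobian of $\Phi$ at the boundary slice $\{\widehat{\rho}=0,\widehat{t}=1\}$, and the power of $e^{\omega}$ prefactor in front of $\widehat{\widetilde{R}}$ will record the homogeneity behavior of the ambient curvature under the natural $\mathbb{R}_+$-dilation $s_\lambda:(t,x,\rho)\mapsto(\lambda t,x,\rho)$.

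First I would construct $\Phi$. Since the straight and normal form \cref{straight-normal-metric} together with its homogeneity under $s_\lambda$ reduces the choice of representative to the choice of a cross-section of $\mathbb{R}_+ \times M \to M$, one may take $\Phi$ to be the unique formal diffeomorphism whose restriction to $\{\widehat{\rho}=0\}$ sends the cross-section $\{\widehat{t}=1\}$ to $\{t=e^{\omega}\}$ and which preserves both the straight and normal form on $\widetilde{g}$ and the relation $\widetilde{f}=tf_\rho$. Computing $d\Phi$ at a point of $\{\widehat{\rho}=0,\widehat{t}=1\}$ in the coordinate bases is then a direct expansion: the first-order matching condition produces the off-diagonal entry $p_i^{0}=\omega_i$ from the identification $t=e^{\omega}\widehat{t}$, while expanding the normal-geodesic compatibility condition to first and second order in $\widehat{\rho}$ yields $p_\infty^{a}=-\omega^{a}$ and $p_\infty^{0}=-\tfrac{1}{2}\omega_k\omega^k$ respectively. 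Naturality then gives $\Phi^*\widetilde{R}^{(g,f)}=\widetilde{R}^{(\widehat{g},\widehat{f})}$ together with the analogous identity for covariant derivatives, so an index-by-index chase through $p_I^A$ produces the claimed transformation rule, with the overall factor $e^{2(s_\infty-1)\omega}$ emerging from the interplay of $s_\lambda^*\widetilde{g}=\lambda^2\widetilde{g}$ with the weights carried by the various types of indices.

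The main obstacle is the even case $n+m\in 2\mathbb{N}$, in which the weighted ambient structure is only determined modulo $O(\rho^{(n+m)/2})$, so $\Phi$ itself is only unique to the same order. I would argue that the curvature components with $s_M + 2s_\infty \leq n+m+1$ are insensitive to this ambiguity, by counting the $\rho$-derivative order implicit in a given component of $\widetilde{R}_{IJKL,M_1\cdots M_r}$: each $\infty$-index contributes at most two $\rho$-derivatives, each $M$-index contributes at most one, and the resulting total must remain strictly below the order at which the ambiguity in the ambient structure becomes visible. The stated hypothesis on $s_M + 2s_\infty$ is precisely calibrated to guarantee this. A final routine compatibility check on the weighted component $\widetilde{f}=tf_\rho$ under the same $\Phi$, together with the constraint $\widehat{f}=e^{\omega}f$ on the boundary, then closes the argument without introducing extra terms into the formula.
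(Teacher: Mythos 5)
Your overall route is the same as the paper's: its proof is the single line ``identical to the proof of \cite{FeffermanGraham2012}*{Proposition 6.5}'', and that argument is precisely your first two paragraphs --- use uniqueness of the straight and normal form (here the weighted version from \cite{CaseKhaitan2022}, applied to the pair $(\widetilde{g},\widetilde{f})$) to produce a homogeneous diffeomorphism intertwining the two ambient structures, compute its Jacobian along $\{\rho=0\}$ to obtain \cref{conformal-change-matrix}, and conclude by naturality of $\widetilde{\nabla}^r\widetilde{R}$ together with homogeneity under the dilations $s_\lambda$. Those parts are sound, including the boundary identification $t=e^{\omega}\widehat{t}$ giving $\omega_i$ and the entries $-\omega^a$, $-\tfrac12\omega_k\omega^k$ arising from the initial data of the normal-form construction in $\widehat{\rho}$.

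The genuine gap is in your even-case counting, which is miscalibrated and, taken literally, does not justify the hypothesis $s_M+2s_\infty\le n+m+1$. You posit that each $\infty$-index contributes at most \emph{two} $\rho$-derivatives and each $M$-index at most one, so on your count a component allowed by the hypothesis could involve up to $n+m+1$ derivatives $\partial_\rho^q(g_\rho,f_\rho)|_{\rho=0}$, whereas the ambiguity of the straight and normal structure is already visible at $q=(n+m)/2$; your requirement that the total stay ``strictly below the order at which the ambiguity becomes visible'' is therefore not a consequence of the hypothesis under your rule. Concretely, for $n+m=10$ the component $\widetilde{R}_{\infty ij\infty,\infty\infty}$ has $s_M+2s_\infty=10\le 11$ and so is covered by the proposition, yet your count permits up to ten $\rho$-derivatives, far past the threshold $q=5$. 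The correct calibration --- the content of \cite{FeffermanGraham2012}*{Proposition 6.1}, which the word ``identical'' silently imports --- is a strength count: assign weight $1$ to each $M$-index, $2$ to each $\infty$-index, $0$ to each $0$-index, and note that each $\rho$-derivative of $(g_\rho,f_\rho)$ itself costs weight $2$; one then shows inductively from the Christoffel symbols of \cref{straight-normal-metric} that a component of strength $w=s_M+2s_\infty$ depends only on $\partial_\rho^q(g_\rho,f_\rho)|_{\rho=0}$ with $2q\le w-2$. (Check against $\widetilde{R}_{\infty ij\infty}=\tfrac12\big(g''_{ij}-\tfrac12 g^{kl}g'_{ik}g'_{jl}\big)$: strength $6$, $q\le 2$ --- each $\infty$-index accounts for \emph{one} derivative, not two.) Then $w\le n+m+1$ forces $q\le (n+m)/2-1<(n+m)/2$, which is exactly why the hypothesis is sharp. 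Note also that in the weighted setting this count must be run for $f_\rho$ as well as $g_\rho$, since both are determined only modulo $O(\rho^{(n+m)/2})$ when $n+m\in 2\mathbb{N}$; your closing ``routine compatibility check'' on $\widetilde{f}=tf_\rho$ does not address that ambiguity.
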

\begin{proof}
This is identical to the proof of \cite{FeffermanGraham2012}*{Proposition 6.5}.
\end{proof}

Now, we study the traces of weighted extended obstruction tensors.
\begin{theorem}
For $k\geq 1$, $k<\frac{n+m}{2}-1$ for $n+m\in 2\mathbb{N}$, we have
\begin{equation}
\label{trace-extended-obstruction-tensor}
g^{ij}(\Omega^m_\phi)^{(k)}_{ij}=-\big(\frac{m}{\widetilde{f}}(\widetilde{\nabla}^2 \widetilde{f})_{\infty\infty}\big)_{,\underbrace{\scriptstyle \infty\dots\infty}_{k-1}}.
\end{equation}
\end{theorem}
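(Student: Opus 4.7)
The plan is to reduce the statement to tracing the ambient Bakry--\'Emery identity and exploiting the ambient vanishing condition. Substituting $\widetilde{\phi}=-m\ln\widetilde{f}$ into the definition of $\widetilde{\text{Ric}}_\phi^m$ collapses the Hessian and gradient-squared terms, yielding the tensor identity
\begin{equation*}
\widetilde{\text{Ric}}=\widetilde{\text{Ric}}_\phi^m+\tfrac{m}{\widetilde{f}}\widetilde{\nabla}^2\widetilde{f}
\end{equation*}
on the weighted ambient space. Taking $k-1$ covariant derivatives of this identity in the $\infty$ direction and reading off the $(\infty,\infty)$-component at $\rho=0$, $t=1$, the goal is to show (i) the $\widetilde{\text{Ric}}_\phi^m$ contribution vanishes, and (ii) the $\widetilde{\text{Ric}}$ contribution equals $-g^{ij}(\Omega_\phi^m)^{(k)}_{ij}$.

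For step (i), the ambient condition gives $\widetilde{\text{Ric}}_\phi^m=O(\rho^N)$ with $N=\infty$ in the generic case and $N=\tfrac{n+m}{2}-1$ in the even case. Since $\widetilde{\nabla}_\infty T=\partial_\rho T-\Gamma\cdot T$ and the Christoffel symbols of the straight normal ambient metric are smooth at $\rho=0$, each application of $\widetilde{\nabla}_\infty$ lowers the order of vanishing by at most one. Thus the $(k-1)$-fold derivative is $O(\rho^{N-k+1})$, which vanishes at $\rho=0$ under the hypothesis $k<\tfrac{n+m}{2}-1$.

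For step (ii), parallel transport of $\widetilde{g}$ yields
\begin{equation*}
\widetilde{\text{Ric}}_{\infty\infty,\underbrace{\scriptstyle\infty\cdots\infty}_{k-1}}=\widetilde{g}^{AC}\widetilde{R}_{A\infty C\infty,\underbrace{\scriptstyle\infty\cdots\infty}_{k-1}}.
\end{equation*}
At $\rho=0$, $t=1$, the straight normal form \eqref{straight-normal-metric} gives $\widetilde{g}^{00}=\widetilde{g}^{\infty\infty}=\widetilde{g}^{0k}=\widetilde{g}^{\infty k}=0$, together with $\widetilde{g}^{0\infty}=1$ and $\widetilde{g}^{ij}=g^{ij}$. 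The only non-spatial surviving contribution is the $\widetilde{g}^{0\infty}$ one, which pairs with $\widetilde{R}_{0\infty\infty\infty,\ldots}$; this vanishes identically by antisymmetry of the Riemann tensor in its last pair of indices. Hence only the spatial trace survives, and the antisymmetry $\widetilde{R}_{i\infty j\infty}=-\widetilde{R}_{\infty ij\infty}$ produces the sign giving $-g^{ij}(\Omega_\phi^m)^{(k)}_{ij}$. Combining (i) and (ii) then yields the stated formula.

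The main technical care needed is the order-of-vanishing bookkeeping in step (i): one must verify that no Christoffel correction reintroduces a finite boundary value at $\rho=0$ during the successive applications of $\widetilde{\nabla}_\infty$. This follows from the smoothness and boundedness of the ambient Christoffel symbols near $\rho=0$, but is the one place where the specific hypothesis $k<\tfrac{n+m}{2}-1$ gets consumed.
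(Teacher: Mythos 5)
Your proposal is correct and takes essentially the same approach as the paper's proof: the decomposition $\widetilde{\Ric}=\widetilde{\Ric}_\phi^m+\frac{m}{\widetilde{f}}\widetilde{\nabla}^2\widetilde{f}$ coming from $\widetilde{\phi}=-m\ln\widetilde{f}$, the vanishing of $\widetilde{\Ric}_\phi^m$ to order $d$ (so that its $k-1$ covariant derivatives in the $\infty$ direction still vanish at $\rho=0$), and the trace identity $g^{ij}\widetilde{R}_{\infty ij\infty,\infty\cdots\infty}=-(\widetilde{\Ric})_{\infty\infty,\infty\cdots\infty}$. You simply make explicit what the paper leaves implicit (the inverse-metric components at $\rho=0$, $t=1$, the vanishing of the cross terms by the Riemann antisymmetries, and the order-of-vanishing bookkeeping under $\widetilde{\nabla}_\infty$), which is a careful elaboration rather than a different route.
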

\begin{proof}
For a weighted ambient metric space, $(\widetilde{\text{Ric}_\phi^m})_{\infty\infty}=O(\rho^d)$ at $\rho=0$, where $d=\infty$ or $(n+m)/2-1$, for $n+m\notin 2\mathbb{N}$ or $n+m\in 2\mathbb{N}$, respectively. The conclusion follows from the identity \begin{equation*}g^{ij}\widetilde{R}_{\infty i j \infty,\underbrace{\scriptstyle\infty\dots \infty}_{k-1}}=-(\widetilde{\text{Ric}})_{\infty\infty,\underbrace{\scriptstyle\infty\dots \infty}_{k-1}}.\qedhere\end{equation*}
\end{proof}
\begin{definition}
Let $k \geq 1$, $k\leq (n+m)/2-1$ for $n+m\in 2\mathbb{N}$. We define the \emph{$k^{\text{th}}$ higher weighted Cotton tensor} $C_{i j l}^{(k)}$ as
\begin{equation}
\begin{aligned}
\label{cotton}
(C_\phi^m)_{i j l}^{(k)}=2 \widetilde{R}_{\infty(i j) l, \underbrace{\scriptstyle\infty\dots\infty}_{k-1}}+\widetilde{R}_{\infty i j \infty, \underbrace{\scriptstyle l \infty \ldots \infty}_{k-1}}+
\widetilde{R}_{\infty i j \infty, \underbrace{\scriptstyle\infty l \infty \ldots \infty}_{k-1}}\\
+\cdots+\widetilde{R}_{\infty i j \infty, \underbrace{\scriptstyle\infty\dots l}_{k-1}} .
\end{aligned}
\end{equation}
\end{definition}
We note that $(\widetilde{\text{Ric}_\phi^m})_{\infty l}=O(\rho^d)$ at $\rho=0$, where $d=\infty$ or $(n+m)/2-1$, for $n+m\notin 2\mathbb{N}$ or $n+m\in 2\mathbb{N}$, respectively. Also, $g^{ij}\widetilde{R}_{\infty i j l}=-\widetilde{\text{Ric}}_{\infty l}$. Hence, the trace of the $k^{\text{th}}$ higher weighted Cotton tensor can be written as
\begin{equation*}
\begin{aligned}
g^{ij}(C_\phi^m)^{(k)}_{ijl}=-\Big[2\big(\frac{m}{\widetilde{f}}(\widetilde{\nabla}^2 \widetilde{f}){}_{l\infty}\big){}_{,\underbrace{\scriptstyle\infty\dots\infty}_{k-1}}+\big(\frac{m}{\widetilde{f}}(\widetilde{\nabla}^2 \widetilde{f}){}_{\infty\infty}\big){}_{,\underbrace{\scriptstyle l\infty\dots\infty}_{k-1}}+\\
\quad\quad\dots+\big(\frac{m}{\widetilde{f}}(\widetilde{\nabla}^2 \widetilde{f}){}_{\infty\infty}\big){}_{{,\underbrace{\scriptstyle\infty\dots\infty l}_{k-1}}}\Big].
\end{aligned}
\end{equation*}
Using \cref{curvature-transformation-law}, we directly write down the conformal transformation law of weighted extended obstruction tensors.
\begin{equation}
\begin{aligned}
\label{obstruction-transformation-law}
e^{2 k \omega} (\widehat{\Omega_{\phi}^m})_{i j}^{(k)}&=(\Omega_\phi^m)_{i j}^{(k)}\\
&\quad\quad+\sum{}^{{}^{\prime}}{\restr{\widetilde{R}_{A B C D, F_{1} \cdots F_{k-1}}}{{\rho=0, t=1}} p_{\infty}^{A} p_{i}^{B}{p}_{j}^{C} p_{\infty}^{D} p_{\infty}^{F_{1}} \cdots p^{F_{k-1}}_\infty},
\end{aligned}
\end{equation}
where $p^{A} I$ is given by \cref{conformal-change-matrix} and $\sum^{\prime}$ denotes the sum over all indices except for $A B C D F_{1} \cdots F_{k-1}=\infty i j \infty \infty \cdots \infty$. Using \cref{cotton}, we can also write \cref{obstruction-transformation-law} as
\begin{equation*}
e^{2 k \omega} (\widehat{\Omega_{\phi}^m})_{i j}^{(k)}=(\Omega_\phi^m)_{i j}^{(k)}-\omega^{l} C_{i j l}^{(k)}+O\left(|\nabla \omega|^{2}\right).
\end{equation*}
We now show that the coefficients of the weighted ambient metric measure can be written as homogeneous polynomials in $P_\phi^m,Y_\phi^m$ and $(\Omega_\phi^m)^{(j)}$.
\begin{proof}[Proof of \cref{linear-expansion-theorem}]
Let $(\Lambda_\rho^{(k)})_{i j}=\widetilde{R}_{\infty i j \infty,\underbrace{\scriptstyle\infty \ldots \infty}_{k-1}}$. Hence, $\restr{\Lambda^{(k)}_\rho}{{\rho=0}}=(\Omega_\phi^m)^{(k)}$. We perform all our calculations at $\rho=0$. Let $k\geq 1$, and for $n+m\in 2\mathbb{N}$, let $k\leq (n+m)/2-1$ for $(\partial^k g_\rho,\partial^k f_\rho)$ and $k\leq (n+m)/2$ for $\frac{1}{2}g^{ij}\partial^{k} g_{ij}+\frac{m}{f}\partial^{k} f$. First, we prove that
\begin{equation}
\begin{aligned}
\label{linear-formulas}
\partial_{\rho}^{k} g_\rho&=\mathcal{Q}^{(k)}\left((g_\rho)^\prime, \Lambda^{(1)}_\rho, \ldots, \Lambda^{(k-1)}_\rho\right),\\
\partial_{\rho}^{k} f_\rho&=\mathcal{S}^{(k)}\left(f_\rho^\prime,(g_\rho)^{\prime}, \Lambda^{(1)}_\rho, \ldots, \Lambda^{(k-1)}_\rho\right),\\
\frac{1}{2}g^{ij}\partial^{k} g_{ij}+\frac{m}{f}\partial^{k} f&=\mathcal{T}\left(f_\rho^\prime,(g_\rho)^{\prime}, \Lambda^{(1)}_\rho, \ldots, \Lambda^{(k-2)}_\rho\right),
\end{aligned}
\end{equation} for some linear combinations of contractions $\mathcal{Q}^{(k)},\mathcal{S}^{(k)}$ and $\mathcal{T}^{(k)}$, whose coefficients are functions of $g,g^{-1}$ and $f$.

The $k=1$ case is trivially true. Let us assume that these formulas hold true for $(\partial^{k-1} g_{ij},\partial^{k-1} f)$, and consider $(\partial^{k} g_{ij},\partial^{k} f)$. For brevity, we refer to $\Lambda_\rho,f_\rho$ and $g_\rho$ as $\Lambda,f$ and $g$ respectively in this proof.

For a straight and normal weighted ambient space, we have
\begin{subequations}
\begin{align}
\begin{split}
\label{lambda-derivative}
\partial_{\rho} \Lambda_{i j}^{(k)}&=\Lambda_{i j}^{(k+1)}+g^{l m} g_{m(i}^{\prime} \Lambda_{j) l}^{(k)},
\end{split}\\
\begin{split}
\label{g-double-prime}
g''_{ij}&=2\Lambda_{ij}^{(1)}+\frac{1}{2}g^{kl}g'_{ik}g'_{jl},
\end{split}\\
\begin{split}
\label{g-inverse}
(g^{ij})'&=-g^{ik}g'_{kl}g^{lj},
\end{split}\\
\begin{split}
\label{f-double-prime}
f''&=-\frac{f}{m}g^{ij}\Lambda_{ij}^{(1)} \,\bmod\, O(\rho^n),
\end{split}
\end{align}
\end{subequations}
where $n=\infty$ or $(n+m)/2-1$, for $n+m\notin 2\mathbb{N}$ or $n+m\in 2\mathbb{N}$, respectively.

\cref{lambda-derivative} follows from direct computation, and the following Christoffel symbols for a straight and normal weighted ambient space~\cite{CaseKhaitan2022}*{7.13}: $$\widetilde{\Gamma}_{\infty\infty}^K=0,\quad \widetilde{\Gamma}_{\infty j}^k=\frac{1}{2}(g_\rho)^{kl}(g_\rho)'_{lj}.$$

\cref{g-double-prime} follows from the equation~\cite{FeffermanGraham2012}*{6.1} $$\widetilde{R}_{\infty i j \infty}=\frac{1}{2}\left(g''-\frac{1}{2}g^{kl}g'_{ik}g'_{jl}\right).$$ 

\cref{f-double-prime} follows from the fact that $\restr{(\widetilde{\text{Ric}_\phi^m})_{\infty\infty}}{\rho=0}=O(\rho^n)$, where $n=\infty$ or $(n+m)/2-1$, for $n+m\notin 2\mathbb{N}$ or $n+m\in 2\mathbb{N}$, respectively.
As $\widetilde{\Gamma}_{\infty\infty}^K=0$, it holds that $(\widetilde{\nabla}^2\widetilde{f})_{\infty\infty}=f''$. Also, $(\widetilde{\text{Ric}})_{\infty\infty}=\widetilde{g}^{IJ}\widetilde{R}_{I\infty J\infty}=-g^{ij}\widetilde{R}_{\infty i j \infty}=-g^{ij}\Lambda^{(1)}_{ij}$. We get the result on putting these together. 

From \cref{lambda-derivative,g-double-prime,f-double-prime,g-inverse}, we get
\begin{equation*}
\begin{aligned}
\partial^{k} g_{ij}&=2\Lambda_{ij}^{(k-1)}+\mathcal{G}^{(k)}(g',\Lambda^{(1)}_{ij},\dots,\Lambda^{(k-2)}_{ij}),\\
\partial^{k} f&=-\frac{f}{m}g^{ij}\Lambda_{ij}^{(k-1)}+\mathcal{F}^{(k)}(f',g',\Lambda^{(1)}_{ij},\dots,\Lambda^{(k-2)}_{ij}),\\
\frac{1}{2}g^{ij}\partial^{k} g_{ij}+\frac{m}{f}\partial^{k} f&=\mathcal{L}^{(k)}(f',g',\Lambda^{(1)}_{ij},\dots,\Lambda^{(k-2)}_{ij}).
\end{aligned}
\end{equation*}
Thus, by induction, we complete the proof of \cref{linear-formulas}. Note that for $n+m\in 2\mathbb{N}$, $\Lambda_{ij}^{(k)}$ is defined only for $1\leq k<(n+m)/2-1$. That is why in the case that $n+m\in 2\mathbb{N}$, $k\leq (n+m)/2-1$ for $(\partial^k (g_\rho),\partial^k f_\rho)$ and $k\leq (n+m)/2$ for $\frac{1}{2}g^{ij}\partial^{k} g_{ij}+\frac{m}{f}\partial^{k} f$.

Now, upon studying the Taylor expansion of \cref{volume-expanding}, we observe that each $ v_{k,\phi}^m $ can be written as a complete contraction of $\partial_\rho^l g_{ij}$ and $\partial_\rho^l f$ for $1\leq l\leq k-1$ and $\frac{1}{2} g^{i j} \partial^{k} g_{i j}+\frac{m}{f} \partial^{k} f$. Hence, for $k\geq 1$, $k\leq (n+m)/2$ for $n+m\in 2\mathbb{N}$, we have $$
v_{k,\phi}^m(g)=\mathcal{V}^{(k)}\left(Y_\phi^m, P_\phi^m, (\Lambda_\phi^m)^{(1)}, \ldots, (\Lambda_\phi^m)^{(k-2)}\right)
$$ for some linear combination of contractions $\mathcal{V}$.
Restricting to $\rho=0$ and recalling that $\restr{\Lambda^{(k)}_{ij}}{{\rho=0}}=(\Omega_\phi^m)_{ij}^{(k)}$ completes the proof of \cref{linear-expansion-theorem}.
\end{proof}
We now study the conformal transformation laws of $(g_\rho,f_\rho)$ and $ v_{k,\phi}^m $.
\begin{proof}[Proof of \cref{first-theorem-wrvc}]
Consider the following transformation laws (cf.\ \cite{Case2014s}), where $(\widehat{g},\widehat{f})=\left(e^{-\frac{2 \omega} {n+m-2}} g,e^{- \frac{\omega} {n+m-2}} f\right)$.
\begin{subequations}
\label{transformation-laws}
\begin{align}
\begin{split}
e^{-\frac{2 \omega}{n+m-2}} \widehat{J_{\phi}^{m}}&=J_{\phi}^{m}+\Delta_{\phi} \omega-\frac{1}{2}|\nabla \omega|^{2},    
\end{split}\\
\begin{split}
\widehat{P_{\phi}^{m}}&=P_{\phi}^{m}+\nabla^{2} \omega+\frac{1}{n+m-2} d \omega \otimes d \omega-\frac{1}{2(n+m-2)}|\nabla \omega|^{2} g,\end{split}\\
\begin{split}
e^{-\frac{2 \omega}{n+m-2}}\widehat{Y_{\phi}^m}&=Y_\phi^m-\langle \nabla\phi,\nabla\omega\rangle-\frac{m}{2(n+m-2)}|\nabla\omega|^2.    
\end{split}
\end{align}
\end{subequations}
Clearly, all transformation formulas involve at most the second derivatives of $\omega$. Also, we know from \cref{curvature-transformation-law} that the conformal transformation formulas of $(\Omega_\phi^m)^{(k)}$ involve only the first derivatives of $\omega$. Hence, from \cref{linear-expansion-theorem}, we conclude that the conformal transformation formulas of $\partial^k g_{\rho}$, $\partial^{k} f_{\rho}$ and $v_{k,\phi}^m$ involve at most the second derivatives of $\omega$. 
\end{proof}
We now write down the conformal transformation law of $ v_{k,\phi}^m $ more explicitly, counting the number of $\nabla\omega$'s appearing in each summand of the expression. 
\begin{proof}[Proof of \cref{conformal-change-volume-equation}]
Let $(\widehat{g},\widehat{f})=(s^2 g, sf)$ for some $s\in \mathbb{R}_+$. Then the straight and normal weighted ambient spaces of $(g,f)$ and $(\widehat{g},\widehat{f})$ are related by the diffeomorphism 
\begin{equation*}
\widehat{t}=ts^{-1}, \,\,\, \widehat{x}=x, \,\,\, \widehat{\rho}=\rho s^2.    
\end{equation*}
Hence, we can infer from \cref{volume-expanding} that $\widehat{v_{k,\phi}^m}= s^{-2k} v_{k,\phi}^m $.
Now we know from \cref{v-equation} that 
$$
v_{k,\phi}^m=\mathcal{V}_{k}\left(Y_\phi^m, P_\phi^m, (\Omega_\phi^m)^{(1)}, \ldots, (\Omega_\phi^m)^{(k-2)}\right).
$$
Consider a summand in the linear combination of contractions $\mathcal{V}_{k}$ which is of degree $(y,d_0,d_1,\dots, d_{k-2})$ in $(Y_\phi^m, P_\phi^m, (\Omega_\phi^m)^{(1)}, \ldots, (\Omega_\phi^m)^{(k-2)})$. We know from \cref{transformation-laws} that $\widehat{Y_{\phi}^m}=s^{-2}Y_\phi^m$, and from \cref{curvature-transformation-law} that $\widehat{(\Omega_\phi^m)^{(l)}}=e^{-2l}(\Omega_\phi^m)^{(l)}$. Moreover, there must be $d_0+\dots+d_{k-2}$ contractions in this summand, and $\widehat{g}^{-1}=s^{-2}g^{-1}$. 
Therefore, we get 
\begin{equation}
\label{k-value}
k=y+ \sum\limits_{l=0}^{k-2} (l+1)d_l.   
\end{equation}
We now consider
\begin{equation}
\label{equation-1}
v_{k,\phi}^m=\sum\limits_{p=0}^k \mathcal{V}_{k,p}\left(Y_\phi^m, P_\phi^m, (\Omega_\phi^m)^{(1)}, \ldots, (\Omega_\phi^m)^{(k-2)}\right),
\end{equation} where $\mathcal{V}_{k,p}$ refers to the monomials in $\mathcal{V}_k$ such that $y+d_0=p$. We infer from \cref{k-value} that $\mathcal{V}_{k,k-1}=0$. Also, for spaces that are locally conformally flat in the weighted sense, $\mathcal{V}_k=\mathcal{V}_{k,k}$ because  by \cref{lcf-lemma}, $(\Omega_\phi^m)^{(l)}=0$. Since $ v_{k,\phi}^m =\sigma_{k,\phi}^m(Y_\phi^m, P_\phi^m)$ for locally conformally flat manifolds (cf.\ \cite{Case2014s}), we infer that $\mathcal{V}_{k,k}=\sigma_{k,\phi}^m(Y_\phi^m, P_\phi^m)$. We therefore write \cref{equation-1} as \begin{equation*}
v_{k,\phi}^m=(\sigma_\phi^m)_k(Y_\phi^m, P_\phi^m)+\sum\limits_{p=0}^{k-2} \mathcal{V}_{k,p}\left(Y_\phi^m, P_\phi^m, (\Omega_\phi^m)^{(1)}, \ldots, (\Omega_\phi^m)^{(k-2)}\right).
\end{equation*}
Now for $(\widehat{g},\widehat{f})=(e^{2\omega}g, e^\omega f)$, where $\omega\in C^\infty(M)$, we get
\begin{equation*}
\widehat{(v_{\phi}^m)}_{k}(\widehat{g},\widehat{f})=\widehat{(\sigma_{\phi}^m)}_k(\widehat{Y_{\phi}^m}, \widehat{P_{\phi}^m})+\sum\limits_{p=0}^{k-2} \mathcal{V}_{k,p}\left(\widehat{Y_{\phi}^m},\widehat{P_{\phi}^m}, (\widehat{\Omega_{\phi}^m})^{(1)}, \ldots, (\widehat{\Omega_{\phi}^m})^{(k-2)}\right).
\end{equation*}
We now separate out all summands of the form $e^{l\omega}$ on the right. Note that $\widehat{\sigma_{k,\phi}^m}(\widehat{Y_{\phi}^m}, \widehat{P_{\phi}^m})=e^{-2k\omega}\widehat{(\sigma_{\phi}^m)}_k(e^{2\omega}\widehat{Y_{\phi}^m}, \widehat{P_{\phi}^m})$, where all contractions are with respect to $g$. Similarly, 
\begin{align*}
\widehat{\mathcal{V}_{k,p}}\big(\widehat{Y_{\phi}^m},\widehat{P_{\phi}^m}, &(\widehat{\Omega_{\phi}^m})^{(1)}, \ldots, (\widehat{\Omega_{\phi}^m})^{(k-2)}\big)\\
&=e^{-2k\omega}\widehat{\mathcal{V}_{k,p}}\big(e^{2\omega}\widehat{Y_{\phi}^m},\widehat{P_{\phi}^m}, e^{2\omega}(\widehat{\Omega_{\phi}^m})^{(1)}, \ldots, e^{2(k-2)\omega}(\widehat{\Omega_{\phi}^m})^{(k-2)}\big).    
\end{align*} If we set 
\begin{equation*}
r_{k,p}(x,\nabla \omega, \widehat{Y_{\phi}^m},\widehat{P_{\phi}^m})=\widehat{\mathcal{V}_{k,p}}\big(e^{2\omega}\widehat{Y_{\phi}^m},\widehat{P_{\phi}^m}, e^{2\omega}(\widehat{\Omega_{\phi}^m})^{(1)}, \ldots, e^{2(k-2)\omega}(\widehat{\Omega_{\phi}^m})^{(k-2)}\big),      
\end{equation*} where all the $\nabla\omega$'s come from $(\widehat{\Omega_{\phi}^m})^{(l)}$, then $r_{k,p}(x,\nabla \omega, \widehat{Y_{\phi}^m},\widehat{P_{\phi}^m})$ is a polynomial that is homogeneous in $\widehat{Y_{\phi}^m}, \widehat{P_{\phi}^m}$, with coefficients depending on $(g,f)$. 

We now bound its degree in $\nabla\omega$. Following the argument in  \cite{Graham2009}*{Proposition 2.6}, we conclude that each $e^{2l\omega}(\widehat{\Omega_{\phi}^m})^{(l)}$ is of degree at most $ 2l$ in $\nabla\omega$. It follows from \cref{k-value} that $r_{k,p}(x,\nabla \omega, \widehat{Y_{\phi}^m},\widehat{P_{\phi}^m})$ has degree in $\nabla\omega$ at most
\begin{equation*}
2\sum\limits_{l=1}^{k-2}ld_l=2\big(k-p-\sum\limits_{l=1}^{k-2}d_l\big).\end{equation*} As $y+d_0=p<k$, we can infer from \cref{k-value} that at least one of the $d_l$'s has to be non-zero. This implies that the degree of $r_{k,p}(x,\nabla \omega, \widehat{Y_{\phi}^m},\widehat{P_{\phi}^m})$ in $\nabla\omega$ is at most $ 2k-2p-2$.\qedhere
\end{proof}

\section{Linearization}
\label{sec:linearization}
We first define weighted Poincar\'e spaces.
\begin{definition}
A \emph{weighted Poincar\'e space}~\cite{CaseKhaitan2022}*{Definition 5.1} for $(M^n,[g,f],m,\mu)$ is a metric measure structure $(g_+,f_+)$ on $M\times [0,\epsilon)$ such that $(g_+,f_+)$ has $(M^n,[g,f],m,\mu)$ as conformal infinity, and
\begin{enumerate}[label=(\roman*)]
\item if $n+m\notin 2\mathbb{N}$, then 
\begin{equation*}
\begin{aligned}
\text{Ric}_{\phi}^m(g_+)+(n+m)g_+,
F_\phi^m(g_+)
-(n+m)f_+^2=O(r^\infty),\end{aligned} 
\end{equation*}

\item if $n+m\in 2\mathbb{N}$, then 
\begin{equation*}
\begin{aligned}
\text{Ric}_{\phi}^m(g_+)+(n+m)g_+,
F_\phi^m(g_+)
-(n+m)f_+^2&=O(r^{n+m-2}),\\
\mathrm{tr}_{g_+}\mathrm{Ric}_\phi^m(g_+)-\frac{m}{f_+^2}F_\phi^m(g_+)+(n+m)(n+m+1)&=O(r^{n+m}),
\end{aligned} 
\end{equation*} 
\end{enumerate}
where $F_\phi^m(g_+)=f\Delta_{g_+}f+(m-1)(|\nabla f_+|^2_{g_+}-\mu)$.
\end{definition}
A weighted Poincar\'e space in normal form can be written as $$(g_+,f_+)=\left(\frac{dr^2+g_r}{r^2},\frac{f_r}{r}\right)$$ with $(g_0,f_0)=(g,f)$.

We now write down the infinitesimal conformal variation formula of $ v_{k,\phi}^m $.
\begin{proof}[Proof of \cref{infinitesimal-conformal-change-theorem}]
Let $(M^n,g,f,m,\mu)$ be a smooth metric measure space, and let $$(g_+,f_+)=\left(\frac{dr^2+g_r}{r^2},\frac{f_r}{r}\right)$$ be a weighted Poincar\'e space in normal~\cite{CaseKhaitan2022} form with respect to $(g,f)$, such that $(g_0,f_0)=(g,f)$. If we choose another metric measure structure $(\widehat{g},\widehat{f})=(e^{2\omega}g,e^\omega f)$ in the same conformal class $[g,f]$, the weighted Poincar\'e space in normal form for it is $\left(\frac{dr^2+\widehat{g_r}}{r^2},\frac{\widehat{f_r}}{r}\right)$ such that $(\widehat{g_0},\widehat{f_0})=(\widehat{g},\widehat{f})$. There exists a diffeomorphism $\psi:X\to X$ such that $\restr{\psi}{M}=Id$ and $$\left(\psi^*\frac{dr^2+g_r}{r^2},\psi^*\frac{f_r}{r}\right)=\left(\frac{dr^2+\widehat{g_r}}{r^2},\frac{\widehat{f_r}}{r}\right).$$
Let us now consider a one-parameter family of conformal transformations $(\widehat{g}(t),\widehat{f}(t))=(e^{2\omega t}g,e^{\omega t} f)$. This generates a one-parameter family of diffeomorphisms $\psi^*_t$ such that $\restr{\psi_t}{M}=Id$ and $$\left(\psi_t^*\frac{dr^2+g_r}{r^2},\psi_t^*\frac{f_r}{r}\right)=\left(\frac{dr^2+\widehat{g(t)}_r}{r^2},\frac{\widehat{f(t)}_r}{r}\right),$$ where $(\widehat{g(t)}_0,\widehat{f(t)}_0)=(e^{2\omega t}g,e^{\omega t} f)$. This one-parameter family of diffeomorphisms also generates a vector field $X=X^0\partial_r + X^i\partial_i$ such that the flow along $X$ corresponds to the family of diffeomorphisms. 

From the proof of \cite{Graham2009}*{Theorem 3.1}, we know that $$X^0=-\omega r,\quad X^i=\omega_j\int_0^r {sg_s^{ij}\,ds}\, ,$$ and
\begin{equation*}
(\delta g_r)_{ij}=\omega(2-r\partial_r)(g_r)_{ij}+2\nabla_{(i} X_{j)}.
\end{equation*}
Similarly, from
\begin{equation*}
L_X\left(\frac{f_r}{r}\right)=\frac{\delta f_r}{r},
\end{equation*}
we get 
\begin{equation*}
\delta f_r=\omega(1-r \partial_r) f_r+\omega_j(f_r)_i \int_0^rsg_s^{ij}\,ds.
\end{equation*}
With the change of variables $r=-\frac{1}{2}\rho^2$, we get
$$(v_\phi^m)(\rho,t)=\left(\frac{f_{\rho}(t)}{f_0(t)}\right)^{m}\left(\frac{\operatorname{det} g_{\rho}(t)}{\operatorname{det} g_0(t)}\right)^{\frac{1}{2}}.$$  Let $Y^i=-(\partial_{j} \omega)\int_{0}^{\rho} g^{i j}(u) \,d u$. We have 
\begin{align*}
\frac{\delta v_\phi^m}{v_\phi^m}=\delta \log (v_\phi^m)&=m\left(\frac{\delta f_\rho(t)}{f_\rho(t)}-\omega\right)+\frac{1}{2}\left(g^{i j} \delta g_{i j}-2 n \omega\right)\\
&=m\left(\frac{Y^i\partial_i f_\rho}{f_\rho}-2\omega\rho\frac{\partial_\rho f_\rho}{f_\rho}\right)-\omega g^{i j} \rho \partial_{\rho} g_{i j}+ \nabla_{i}^{(\rho)} Y^i\\
&=m\frac{Y^i\partial_i f_\rho}{f_\rho}-2\omega\rho\frac{\partial_\rho v_\phi^m}{v_\phi^m}+\nabla_{i}^{(\rho)} Y^i.
\end{align*}
Therefore, we get 
\begin{equation}
\label{delta-v}
\delta v_\phi^m= m v_\phi^m\frac{Y^i\partial_i f_\rho}{f_\rho}-2\omega\rho \partial_\rho v_\phi^m+v_\phi^m\nabla_i^{(\rho)}Y^i.   
\end{equation}
We observe that 
\begin{equation*}
v_\phi^m \nabla_i^{(\rho)}Y^i =\nabla_i^{(0)}(v_\phi^m Y^i)+m v_\phi^m\frac{\partial_i(f_0/f_\rho)}{(f_0/f_\rho)} Y^i.    
\end{equation*}

Hence, \cref{delta-v} can be written as \begin{equation}
\label{actual-delta-v}
   \delta v_\phi^m=-2\omega\rho \partial_\rho v_\phi^m+ \nabla_i^{(0)}(v_\phi^m Y^i)+mv_\phi^mY^i \partial_i(\log f_0).
\end{equation}
Note that as $\phi=-m\log f$ and $\nabla^*=\nabla-\nabla\phi$~\cite{Case2014s}, \cref{actual-delta-v} can also be written as 
\begin{equation*}
 \delta v_\phi^m=-2\omega\rho \partial_\rho v_\phi^m+ {\nabla^*_i}^{(0)}(v_\phi^m Y^i).
\end{equation*}
Therefore, 
\begin{equation}
\label{variation of volumetric coefficient}
\delta v_{k,\phi}^m=-2 \omega k v_{k,\phi}^m +\frac{1}{k !} \partial_{\rho}^{k}[{\nabla^*_i}^{(0)}(v_\phi^m Y^i)].
\end{equation}
We may also write \cref{variation of volumetric coefficient} as 
\begin{equation*}
\delta v_{k,\phi}^m =-2 \omega k v_{k,\phi}^m +\nabla^*_i[(L_{k,\phi}^m)^{ij}\nabla_j\omega],  
\end{equation*} where $(L_{k,\phi}^m)^{ij}=-\frac{1}{k !} \restr{\partial_{\rho}^{k}}{{\rho=0}}\big[(v_\phi^m)\int_0^\rho g^{ij}(u) \,du\big]$.
\end{proof}
\begin{corollary}
As $\frac{1}{k !} \partial_{\rho}^{k}[{\nabla^*_i}^{(0)}((v_\phi^m) Y^i)]$ is a divergence term, we have 
\begin{equation}
\label{variational-integral}
\int_M \delta v_{k,\phi}^m \,\mathrm{dvol}_\phi=-2k\int_M \omega \,v_{k,\phi}^m \,\mathrm{dvol}_\phi. 
\end{equation}
\end{corollary}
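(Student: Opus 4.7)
The plan is to integrate the variational formula from \cref{infinitesimal-conformal-change-theorem} termwise against the weighted volume form $\mathrm{dvol}_\phi$. Using the form
\[
\delta v_{k,\phi}^m = -2\omega k\, v_{k,\phi}^m + \frac{1}{k!}\partial_\rho^k\!\left[{\nabla_i^*}^{(0)}\!\left(v_\phi^m\, Y^i\right)\right],
\]
the first summand integrates directly to $-2k\int_M \omega\, v_{k,\phi}^m\,\mathrm{dvol}_\phi$, which is precisely the claimed right-hand side of \cref{variational-integral}. It therefore suffices to show that the integral of the second summand vanishes.

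For this, I would fix $\rho$ in a small neighborhood of $0$ and observe that $v_\phi^m Y^i$ is a smooth vector field on the closed manifold $M$. Since $-\nabla^*$ is the formal adjoint of $\nabla$ with respect to the $L^2$-inner product induced by $\mathrm{dvol}_\phi = e^{-\phi}\,\mathrm{dvol}_g$, one computes $\nabla_i^* X^i = \nabla_i X^i - X^i\nabla_i\phi$, so
\[
\nabla_i^* X^i \cdot e^{-\phi} = \nabla_i\!\left(X^i e^{-\phi}\right)
\]
for any vector field $X^i$ on $M$. Consequently, the usual divergence theorem on compact $M$ gives
\[
\int_M {\nabla_i^*}^{(0)}\!\left(v_\phi^m\, Y^i\right)\mathrm{dvol}_\phi = 0
\]
for every $\rho$ in the chosen neighborhood.

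To conclude, I would differentiate under the integral sign: since $g_\rho$ and $f_\rho$ are smooth in $\rho$, so is the integrand, and hence $\partial_\rho^k$ commutes with $\int_M$. Evaluating at $\rho=0$ then yields zero contribution from the second summand, which combined with the first summand gives \cref{variational-integral}. I do not anticipate any genuine obstacle here; the corollary is a direct application of the weighted Stokes' theorem to the formula already established in \cref{infinitesimal-conformal-change-theorem}, with the only point to verify being smoothness in $\rho$ of all relevant quantities, which follows from the construction of the straight and normal weighted ambient metric measure structure.
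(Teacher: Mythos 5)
Your proposal is correct and matches the paper's (implicit) argument: the paper justifies \cref{variational-integral} precisely by noting that the second summand of \cref{infinitesimal-conformal-change},
$\frac{1}{k!}\partial_\rho^k[{\nabla_i^*}^{(0)}(v_\phi^m Y^i)]$, is a weighted divergence, and your computation $e^{-\phi}\nabla_i^*X^i=\nabla_i(e^{-\phi}X^i)$ together with compactness of $M$ and smoothness in $\rho$ (so that $\partial_\rho^k$ commutes with $\int_M$, or simply that the integral vanishes identically in $\rho$) is exactly the content being invoked. No gaps; this is a direct application of the weighted divergence theorem, as intended.
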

\section{Critical points}
\label{sec:critical-points}
In this section, we study the critical points of $ \mathcal{F}_{k,\phi}^m $. First, we show that the critical points of $ \mathcal{F}_{k,\phi}^m $ in $\mathcal{C}_1$ satisfy the condition that $ v_{k,\phi}^m $ is constant.
\begin{proof}[Proof of \cref{first-derivative-fphim}]
\label{proof-of-first-derivative-fphim}
We have 
\begin{equation*}
\delta  \mathcal{F}_{k,\phi}^m =\int_M \delta v_{k,\phi}^m \,\mathrm{dvol}_\phi+(n+m)\int_M v_{k,\phi}^m \omega\, \mathrm{dvol}_\phi.
\end{equation*}
From \cref{variational-integral}, we get
\begin{equation}
\label{variation-fphim}
\delta \mathcal{F}_{k,\phi}^m =(n+m-2k)\int_M v_{k,\phi}^m  \omega\,\mathrm{dvol}_\phi. \end{equation}
Let $\mathrm{WtdVol}$ denote the weighted volume functional. Using the Lagrange multiplier method, we have for some $a>0$,
\begin{align*}
\delta\left[\mathcal{F}_{k,\phi}^m-a \mathrm{WtdVol}(M)\right](g,f)=0.\end{align*}
Hence, \begin{align*}
(n+m-2k)\int_M v_{k,\phi}^m  \omega\,\mathrm{dvol}_\phi-a (n+m)\int_M \omega \,\mathrm{dvol}_\phi=0.
\end{align*} Therefore, we get $ v_{k,\phi}^m =\frac{a(n+m)}{(n+m-2k)}.\qedhere$
\end{proof}
\begin{lemma}
For $k=(n+m)/2$, the functional $ \mathcal{F}_{k,\phi}^m $ is conformally invariant.
\end{lemma}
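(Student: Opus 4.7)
The plan is to observe that this lemma is an immediate consequence of the variational formula already established in the proof of \cref{first-derivative-fphim}. There, after integrating \cref{variational-integral} and adding the variation of the weighted volume, we obtained
\begin{equation*}
\delta \mathcal{F}_{k,\phi}^m = (n+m-2k)\int_M v_{k,\phi}^m\, \omega\, \mathrm{dvol}_\phi,
\end{equation*}
which is valid in the range $k\leq (n+m)/2$ for $n+m\in 2\mathbb{N}$ (precisely the range where \cref{infinitesimal-conformal-change-theorem} applies), and in particular at the critical value $k=(n+m)/2$.

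Plugging $k=(n+m)/2$ into the prefactor $(n+m-2k)$ gives zero, so $\delta\mathcal{F}_{k,\phi}^m = 0$ for every conformal factor $\omega\in C^\infty(M)$, without any normalization constraint.

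To upgrade this infinitesimal invariance to genuine conformal invariance, I would connect an arbitrary pair $(g,f)$ and $(\widehat g,\widehat f)=(e^{2\omega}g,e^{\omega}f)$ in the same conformal class by the smooth path $(g_t,f_t):=(e^{2t\omega}g,e^{t\omega}f)$ for $t\in[0,1]$. Along this path, $\partial_t(g_t,f_t)$ is an infinitesimal conformal variation at $(g_t,f_t)$ with conformal factor $\omega$, so the formula above gives $\tfrac{d}{dt}\mathcal{F}_{k,\phi}^m(g_t,f_t)=0$. Integrating from $0$ to $1$ yields $\mathcal{F}_{k,\phi}^m(g,f) = \mathcal{F}_{k,\phi}^m(\widehat g,\widehat f)$.

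There is no substantive obstacle: the only point requiring attention is that the variational identity from \cref{infinitesimal-conformal-change-theorem} is indeed valid at the endpoint $k=(n+m)/2$, which is granted by the hypotheses of that theorem and the fact that $v_{k,\phi}^m$ is well-defined there by the construction in \cref{straight-normal-metric}.
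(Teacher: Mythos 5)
Your proposal is correct and matches the paper's proof, which simply cites the variational formula \cref{variation-fphim} and notes that the prefactor $(n+m-2k)$ vanishes at $k=(n+m)/2$. Your additional step of integrating the vanishing first variation along the path $(e^{2t\omega}g, e^{t\omega}f)$ merely makes explicit the standard upgrade from infinitesimal to genuine conformal invariance that the paper leaves implicit.
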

\begin{proof}
This follows directly from \cref{variation-fphim}.
\end{proof}
Given a metric measure structure $(g,f)$, consider the following functional on the conformal class $[g,f]$: $$\mathfrak{F}_{k,\phi}^m(e^{2u}g,e^{u}f)=\int_0^1\int_M u\, v_{k,\phi}^m{}^{(e^{2su}g,e^{su}f)} \, \,\mathrm{dvol}{}_{(e^{2su}g,e^{su}f)}ds.$$
\begin{corollary}
For $k=(n+m)/2$, the critical points of $\mathfrak{F}_{k,\phi}^m$ in $\mathcal{C}_1$ are those for which $v_{k,\phi}^m$ are constant.
\end{corollary}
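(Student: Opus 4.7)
The plan is to compute the first variation of $\mathfrak{F}_{k,\phi}^m$ along a conformal direction $\omega \in C^\infty(M)$ and then apply the Lagrange multiplier argument exactly as in the proof of \cref{first-derivative-fphim}. The key simplification in the critical dimension $k=(n+m)/2$ will be that all ``path integral'' contributions cancel, leaving $\delta\mathfrak{F}_{k,\phi}^m = \int_M \omega\, v_{k,\phi}^m\, \mathrm{dvol}_\phi$ at the endpoint of the conformal path, after which the Lagrange multiplier method finishes the proof.

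First, I would parametrize the conformal class of a reference metric measure structure by $u\in C^\infty(M)$ and differentiate
\[
\mathfrak{F}_{k,\phi}^m(u+t\omega) = \int_0^1\int_M (u+t\omega)\, v_{k,\phi}^m(s(u+t\omega))\, \mathrm{dvol}_\phi(s(u+t\omega))\, ds
\]
at $t=0$. Two terms appear: one from differentiating the explicit factor $(u+t\omega)$, producing $\int_0^1 I(s)\,ds$ with $I(s):=\int_M \omega\, v_{k,\phi}^m(su)\,\mathrm{dvol}_\phi(su)$, and a second from differentiating $v_{k,\phi}^m\,\mathrm{dvol}_\phi$ at the path point $su$ in the direction $s\omega$. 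Using \cref{infinitesimal-conformal-change-theorem} together with $\delta\mathrm{dvol}_\phi=(n+m)\omega\,\mathrm{dvol}_\phi$, and noting that the prefactor $(n+m-2k)$ vanishes at $k=(n+m)/2$, the second term reduces to $\int_0^1 s\int_M u\,\nabla^*_i\bigl[(L_{k,\phi}^m)^{ij}\nabla_j\omega\bigr]\big|_{su}\,\mathrm{dvol}_\phi(su)\,ds$.

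The main technical step is a double integration by parts. Using the symmetry of $(L_{k,\phi}^m)^{ij}$ (inherited from the symmetry of $g^{ij}$ in its definition) and the adjointness of $\nabla^*$, I would transfer all derivatives onto $u$, rewriting the spatial integrand as $\omega\,\nabla^*_i[(L_{k,\phi}^m)^{ij}\nabla_j u]\,\mathrm{dvol}_\phi(su)$. Applying \cref{infinitesimal-conformal-change-theorem} with direction $u$ at the path point $su$, the critical-dimension identity $\tfrac{d}{ds}[v_{k,\phi}^m(su)\,\mathrm{dvol}_\phi(su)] = \nabla^*_i[(L_{k,\phi}^m)^{ij}\nabla_j u]|_{su}\,\mathrm{dvol}_\phi(su)$ identifies this spatial integrand with $I'(s)$. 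Integrating by parts in $s$ gives $\int_0^1 sI'(s)\,ds = I(1) - \int_0^1 I(s)\,ds$, which cancels the first term and leaves $\delta\mathfrak{F}_{k,\phi}^m = I(1) = \int_M \omega\, v_{k,\phi}^m\,\mathrm{dvol}_\phi$ evaluated at $(e^{2u}g, e^u f)$.

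Finally, the Lagrange multiplier argument from the proof of \cref{first-derivative-fphim}, with $\delta\,\mathrm{WtdVol}=(n+m)\int_M\omega\,\mathrm{dvol}_\phi$, forces $\int_M \omega\,[v_{k,\phi}^m - a(n+m)]\,\mathrm{dvol}_\phi = 0$ for all $\omega$, and hence $v_{k,\phi}^m = a(n+m)$ is constant. I expect the main obstacle to be the bookkeeping of the two integrations by parts: the cancellation of path-integral pieces works only at $k=(n+m)/2$, since only then does the pointwise variation of $v_{k,\phi}^m\,\mathrm{dvol}_\phi$ reduce to the pure divergence needed to match $\tfrac{d}{ds}$ of the integrand of $I$.
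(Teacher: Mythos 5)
Your proof is correct and takes essentially the same route as the paper: the paper likewise applies \cref{infinitesimal-conformal-change-theorem} at the critical exponent $k=(n+m)/2$ (where the $-2k\omega$ and $(n+m)\omega$ contributions cancel), uses the self-adjointness of $\omega\mapsto\nabla^*_i[(L_{k,\phi}^m)^{ij}\nabla_j\omega]$ with respect to $\mathrm{dvol}_\phi$ to move the divergence term onto $u$, and recognizes the resulting integrand as $\partial_s\bigl[s\int_M\omega\,v_{k,\phi}^m\,\mathrm{dvol}_\phi\bigr]$ --- precisely your combination $I(s)+sI'(s)$ --- before finishing with the identical Lagrange multiplier argument. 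The only cosmetic difference is that you carry out the $s$-integration by parts explicitly, $\int_0^1 sI'(s)\,ds=I(1)-\int_0^1 I(s)\,ds$, whereas the paper writes the integrand directly as a total $s$-derivative.
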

\begin{proof}
If $(e^{2u}g,e^u f)$ is a critical point of $\mathfrak{F}_{k,\phi}^m$ in $\mathcal{C}_1$, then $$\delta\left[\mathfrak{F}_{k,\phi}^m-a \mathrm{WtdVol}(M)\right](e^{2u}g,e^uf)=0$$ for some $a>0$.
For $k=(n+m)/2$, we get
\begin{align*}
\delta \mathfrak{F}_{k,\phi}^m(e^{2u}g,e^{u}f)&=\restr{\frac{\partial}{\partial t}}{t=0}\mathfrak{F}_{k,\phi}^m(e^{2(u+t\omega)}g,e^{(u+t\omega)}f)\\
&=\int_0^1 \int_M \restr{\frac{\partial}{\partial t}}{t=0} (u+t\omega) v_{k,\phi}^m{}^{(e^{2s(u+t\omega)}g,e^{s(u+t\omega)}f)}\mathrm{dvol}{}_{(e^{2s(u+t\omega)}g,e^{s(u+t\omega)}f)}ds\\
&=\int_0^1\int_M \Big[\omega v_{k,\phi}^m{}^{(e^{2su}g,e^{su}f)}+su\left[\nabla{}_i(L^{ij}\nabla_j\omega)\right]^{(e^{2su}g,e^{su}f)}\Big]\mathrm{dvol}_{(e^{2su}g,e^{su}f)}ds\\
&=\int_0^1 \frac{\partial}{\partial s}\left[\int_M s\omega v_{k,\phi}^m{}^{(e^{2su}g,e^{su}f)}\mathrm{dvol}_{(e^{2su}g,e^{su}f)}\right]ds\\
&=\int_M \omega v_{k,\phi}^m{}^{(e^{2u}g,e^{u}f)}\mathrm{dvol}_{(e^{2u}g,e^{u}f)}.
\end{align*}
Hence, 
\begin{multline*}
\delta\left[\mathfrak{F}_{k,\phi}^m-a \mathrm{WtdVol}(M)\right](e^{2u}g,e^uf)=\\
\int_M [(v_{k,\phi}^m){}^{(e^{2u}g,e^{u}f)}-a(n+m)] \omega \,\mathrm{dvol}_{(e^{2u}g,e^{u}f)}.
\end{multline*}
Hence, if $(e^{2u}g,e^u f)$ is a critical point of $\mathcal{F}_{k,\phi}^m$ for $k=(n+m)/2$, then $(v_{k,\phi}^m){}^{(e^{2u}g,e^{u}f)}$ is constant.
\end{proof}

We now show that quasi-Einstein spaces with nonzero weighted scalar curvature are local extrema of $\mathcal{F}_{k,\phi}^m$ in $\mathcal{C}_1$. Note that the tangent space of $\mathcal{C}_1$ at $(g,f)$ is \begin{equation}
\label{tangent-space-c1-definition}
    \mathcal{T}_{(g,f)}\mathcal{C}_1=\{(2\omega g,\omega f):\int_M \omega \,\mathrm{dvol}_\phi=0\}.
\end{equation}
\begin{lemma}
Quasi-Einstein spaces are critical points of $\mathcal{F}_{k,\phi}^m$ in $\mathcal{C}_1$.
\end{lemma}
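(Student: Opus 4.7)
The plan is to reduce the claim, via the variational formula established in the proof of \cref{first-derivative-fphim}, to showing that $v_{k,\phi}^m$ is a constant function on $M$ for any quasi-Einstein space. Recall from \cref{variation-fphim} that
\[
\delta \mathcal{F}_{k,\phi}^m = (n+m-2k)\int_M v_{k,\phi}^m\, \omega\, \mathrm{dvol}_\phi.
\]
For $k=(n+m)/2$ the functional is conformally invariant (by the lemma following \cref{first-derivative-fphim}), hence every point of $\mathcal{C}_1$ is critical. For $k\neq (n+m)/2$, vanishing of the above expression for all $\omega$ satisfying the tangent-space constraint $\int_M \omega\,\mathrm{dvol}_\phi=0$ (cf. \cref{tangent-space-c1-definition}) is equivalent, by the Lagrange multiplier argument already used in the proof of \cref{first-derivative-fphim}, to $v_{k,\phi}^m$ being constant on $M$.

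The next step is to verify constancy of $v_{k,\phi}^m$ on a quasi-Einstein space by exploiting \cref{linear-expansion-theorem}. For such a space, $P_\phi^m=\lambda g$ and $J_\phi^m=(n+m)\lambda$, so $Y_\phi^m = J_\phi^m - \mathrm{tr}_g P_\phi^m = m\lambda$ is a real constant, and $P_\phi^m$ is $\lambda$ times the metric. Since \cref{v-equation} expresses $v_{k,\phi}^m$ as a linear combination of complete contractions of $Y_\phi^m$, $P_\phi^m$, and the weighted extended obstruction tensors $(\Omega_\phi^m)^{(1)},\dots,(\Omega_\phi^m)^{(k-2)}$ with respect to $(g_\rho,f_\rho)$, it suffices to show that each $(\Omega_\phi^m)^{(l)}$ is a constant multiple of $g$.

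To produce this structural information, I would make the ansatz that the straight normal weighted ambient metric measure of the quasi-Einstein space has the scalar form
\[
(g_\rho, f_\rho) = (h(\rho)\, g,\, k(\rho)\, f),
\]
with $h(0)=k(0)=1$ and initial derivatives dictated by $P_\phi^m = \lambda g$ and $Y_\phi^m = m\lambda$. Substituting into \cref{g-double-prime,f-double-prime,lambda-derivative} reduces the tensorial recursion defining the ambient structure to a scalar ODE system in $(h,k)$, which is solvable order by order. By the uniqueness result of \cite{CaseKhaitan2022} recalled in the introduction, this ansatz must agree with the actual ambient expansion to the required order; consequently $\Lambda_\rho^{(l)}$ is a scalar multiple of $g$ for every admissible $l$, so $(\Omega_\phi^m)^{(l)} = \Lambda_\rho^{(l)}|_{\rho=0}$ is a constant multiple of $g$, with constant a polynomial in $\lambda$.

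It then follows that every complete contraction appearing in $\mathcal{V}_k$ evaluates to a real constant, so $v_{k,\phi}^m$ is constant. In fact, the same ansatz gives $(f_\rho/f)^m(\det g_\rho/\det g)^{1/2} = k(\rho)^m h(\rho)^{n/2}$, a function of $\rho$ alone, so by comparison with \cref{volume-expanding} each $v_{k,\phi}^m$ is identically a real number. The main obstacle is the justification that the scalar ansatz for $(g_\rho,f_\rho)$ actually satisfies the full weighted ambient equations (straightness, normality, and the appropriate vanishing of $\widetilde{\mathrm{Ric}_\phi^m}$ and $\widetilde{F_\phi^m}$); this requires checking compatibility of the ODE system with the higher-order constraints in the definition of a weighted ambient space, which one can do inductively using the symmetry of the ansatz and the fact that all source terms in the recursion remain scalar multiples of $g$ at each step.
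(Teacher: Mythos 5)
Your reduction to the constancy of $v_{k,\phi}^m$ --- via \cref{variation-fphim} and the Lagrange multiplier argument, with $k=(n+m)/2$ handled separately by conformal invariance --- matches the paper, and is in fact slightly more careful than the paper's one-line appeal to \cref{first-derivative-fphim}, which is stated only for $k\neq\frac{n+m}{2}$. Where you diverge is in establishing constancy: the paper runs no ansatz-plus-uniqueness argument, but simply cites \cite{CaseKhaitan2022}*{Section 7.1}, where the straight and normal weighted ambient structure of a quasi-Einstein space is given in closed form, $g_\rho=(1+\lambda\rho)^2g$ and $f_\rho=(1+\lambda\rho)f$ (\cref{quasi-einstein-ambient}), whence $(f_\rho/f)^m(\det g_\rho/\det g)^{1/2}=(1+\lambda\rho)^{n+m}$ and $v_{k,\phi}^m=\binom{n+m}{k}\lambda^k$ exactly (\cref{einstein_v}) --- an explicit formula the paper needs again later for the second variation (\cref{einstein_L}). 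Your scalar ansatz $(g_\rho,f_\rho)=(h(\rho)g,k(\rho)f)$ is precisely how that cited result is obtained, and the uniqueness theorem does legitimize the ansatz once one verifies compatibility with the full ambient system (not only \cref{g-double-prime,f-double-prime,lambda-derivative}, which capture only some components of $\widetilde{\mathrm{Ric}}{}_\phi^m$); you correctly flag this verification as the remaining obstacle, and since it is exactly the content of the citation, your argument is sound modulo a reference you could not have known. Two smaller points: the detour through \cref{linear-expansion-theorem} and the weighted extended obstruction tensors is unnecessary --- as you yourself observe at the end, once $(g_\rho,f_\rho)$ has scalar form, \cref{volume-expanding} yields constancy of every $v_{k,\phi}^m$ directly, and the obstruction-tensor route is additionally delicate because the coefficients of the contractions in $\mathcal{V}_k$ a priori involve $f$, so constancy of the contracted tensors alone does not immediately suffice without a scaling-weight argument; and your closed-form check $k(\rho)^mh(\rho)^{n/2}$ is correct (it equals $(1+\lambda\rho)^{n+m}$ for the actual solution), though reusing $k$ as both the index and the conformal factor invites confusion. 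What the paper's route buys is brevity and the explicit value of $v_{k,\phi}^m$; what yours buys is a self-contained derivation of the scalar form, at the cost of the compatibility check you identified.
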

\begin{proof}
For a quasi-Einstein space of the form of \cref{quasi-einstein-definition}, there is~\cite{CaseKhaitan2022}*{Section 7.1} a unique straight and normal weighted ambient space corresponding to it, of the form \cref{straight-normal-metric}, such that 
\begin{equation}
\label{quasi-einstein-ambient}
g_\rho=(1+\lambda\rho)^2 g, \quad f_\rho=(1+\lambda\rho)f.
\end{equation}
Consequently, as $(v_\phi^m)(\rho)=(1+\lambda\rho)^{n+m}$, we deduce that
\begin{equation}
\begin{aligned}
\label{einstein_v}
v_{k,\phi}^m={n+m\choose k}\lambda^{k},
\end{aligned}
\end{equation}
where $${n+m\choose k}=\frac{(n+m)(n+m-1)\dotsm (n+m-k+1)}{k!}.$$
In particular, $v_{k,\phi}^m$ is constant. The conclusion follows from \cref{first-derivative-fphim}.
\end{proof}

Now we write down the formula for $(\left.\mathcal{F}_{k,\phi}^m\right|_{\mathcal{C}_{1}})^{\prime \prime}(\omega)$. 
\begin{lemma}
Let $n+m \geq 3, k \geq 1$ and $k \leq (n+m)/2$ if $n+m\in 2\mathbb{N}$. Let $(M^n, g,f,m,\mu)$ be a connected compact Riemannian metric measure space and suppose $(g,f)$ satisfies $v_{k,\phi}^m=c$ for some constant $c$. Let $\omega \in C^{\infty}(M)$ satisfy $\int_{M} \omega \,d (v_\phi^m)_{g}=0$. Then
\begin{equation}
\label{double-derivative}
(\left.\mathcal{F}_{k,\phi}^m\right|_{\mathcal{C}_{1}})^{\prime \prime}(\omega)=-(n+m-2k)\int_M[2k  v_{k,\phi}^m \omega^2 +(L_\phi^m)^{ij}\nabla_i \omega\nabla_j \omega]\, \mathrm{dvol}_\phi.    
\end{equation}
\end{lemma}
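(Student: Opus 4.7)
The plan is to differentiate $\mathcal{F}_{k,\phi}^m$ twice along a one-parameter family of conformal deformations that is constrained to lie in $\mathcal{C}_1$, and then use \cref{infinitesimal-conformal-change-theorem} plus the hypothesis $v_{k,\phi}^m = c$ to simplify.

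First, I would fix a test function $\omega\in C^\infty(M)$ with $\int_M \omega\,\mathrm{dvol}_\phi = 0$, and choose a smooth function $u(t)$ with $u(0)=0$, $u'(0)=\omega$, chosen so that the family $(g_t,f_t):=(e^{2u(t)}g,e^{u(t)}f)$ has weighted volume identically one. Because $\mathrm{dvol}_\phi(g_t,f_t) = e^{(n+m)u(t)}\mathrm{dvol}_\phi$, differentiating the constraint $\int_M e^{(n+m)u(t)}\,\mathrm{dvol}_\phi = 1$ twice at $t=0$ yields $\int_M u''(0)\,\mathrm{dvol}_\phi = -(n+m)\int_M \omega^2\,\mathrm{dvol}_\phi$. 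This is the only place where the tangent-space condition \cref{tangent-space-c1-definition} interacts with the second variation.

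Next, I would use \cref{variation-fphim} (which is built from \cref{infinitesimal-conformal-change}) to write
\begin{equation*}
\frac{d}{dt}\mathcal{F}_{k,\phi}^m(g_t,f_t) = (n+m-2k)\int_M u'(t)\,v_{k,\phi}^m(g_t,f_t)\,\mathrm{dvol}_\phi(g_t,f_t).
\end{equation*}
Differentiating once more and evaluating at $t=0$ produces three summands: one from the $u''(0)$ term, one from $\delta v_{k,\phi}^m$ in the direction $\omega$ (supplied by \cref{infinitesimal-conformal-change-theorem}), and one from the variation of the weighted volume form. Using $v_{k,\phi}^m=c$, the $u''(0)$ summand contributes $c\int_M u''(0)\,\mathrm{dvol}_\phi = -c(n+m)\int_M\omega^2\,\mathrm{dvol}_\phi$, and the volume-form summand contributes $c(n+m)\int_M\omega^2\,\mathrm{dvol}_\phi$; these cancel. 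What remains is $(n+m-2k)\int_M \omega\,\delta v_{k,\phi}^m\,\mathrm{dvol}_\phi$.

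Finally, I would substitute
\begin{equation*}
\delta v_{k,\phi}^m = -2k\,c\,\omega + \nabla_i^*\bigl[(L_{k,\phi}^m)^{ij}\nabla_j\omega\bigr]
\end{equation*}
from \cref{infinitesimal-conformal-change-theorem} and integrate by parts using the defining property of $-\nabla^*$ as the adjoint of $\nabla$ with respect to $\mathrm{dvol}_\phi$, turning $\int_M\omega\,\nabla_i^*[(L_{k,\phi}^m)^{ij}\nabla_j\omega]\,\mathrm{dvol}_\phi$ into $-\int_M (L_{k,\phi}^m)^{ij}\nabla_i\omega\,\nabla_j\omega\,\mathrm{dvol}_\phi$. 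Collecting terms yields the claimed expression for $(\mathcal{F}_{k,\phi}^m|_{\mathcal{C}_1})''(\omega)$.

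The only real bookkeeping obstacle is making sure the cancellations between the $u''(0)$ contribution and the variation-of-volume-form contribution are executed cleanly, which is exactly where the constraint $\int_M\omega\,\mathrm{dvol}_\phi=0$ (and the Lagrange-multiplier identity hidden in the derivation of $u''(0)$) is used. Once those cancel, the rest is a single integration by parts.
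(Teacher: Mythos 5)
Your proposal is correct and takes essentially the same route as the paper: both compute $\partial_t^2|_{t=0}\mathcal{F}_{k,\phi}^m$ along a volume-normalized conformal curve by differentiating the first-variation identity \cref{variation-fphim}, use $v_{k,\phi}^m=c$, substitute \cref{infinitesimal-conformal-change}, and integrate by parts against $\nabla^*$. Your explicit cancellation of the $u''(0)$ contribution against the variation of $\mathrm{dvol}_\phi$ is precisely the paper's Lagrange-multiplier subtraction $\mathcal{F}_{k,\phi}^m-a\,\mathrm{WtdVol}(M)$ (with $v_{k,\phi}^m=\frac{a(n+m)}{n+m-2k}$) carried out by hand.
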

\begin{proof}
Let $\gamma_t$ be a curve in $\mathcal{C}_1$ such that $\gamma_0=(g,f)$ and $\gamma'_0=(2\omega g,\omega f)$. Then 
\begin{equation*}
(\left.\mathcal{F}_{k,\phi}^m\right|_{\mathcal{C}_{1}})^{\prime \prime}(\omega)=\left.\partial_{t}^{2}\right|_{t=0}\left((\mathcal{F}_\phi^m)_{k}-a \mathrm{WtdVol}(M)\right)\left(g_{t},f_t\right).    
\end{equation*}
Note that here $ v_{k,\phi}^m =\frac{a(n+m)}{(n+m-2k)}$. Using \cref{variation-fphim} and integration by parts, we get \cref{double-derivative}.
\end{proof}

Before we prove \cref{double-derivative-theorem}, we must prove a weighted version of the Lichnerowicz-Obata theorem~\cite{Case2014s}. Bakry and Qian proved that when the weighted Ricci curvature of a smooth metric measure space is bounded below, the first non-zero eigenvalue of the weighted Laplacian is also bounded below~\cite{BakryQian2000}*{Equation 3.1}. The following rigidity statement is well known to the experts, but we were not able to find a proof in the literature.
\begin{lemma}
\label{eigenvalue-lower-bound}
Let $(M^n,g,f,m,\mu)$ be a compact, connected smooth metric measure space such that $\mathrm{Ric}_\phi^m\geq 2(m+n-1)\lambda g$ for $\lambda>0$. Then the first non-zero eigenvalue $\lambda_1$ of $-\Delta_\phi$ satisfies $\lambda_1\geq 2(m+n)\lambda$. Moreover, if $\lambda_1=2(m+n)\lambda$, then $m=0$ and $M^n$ is isometric to $S^n(\sqrt{2\lambda})$.
\end{lemma}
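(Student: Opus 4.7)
The plan is to run the weighted Lichnerowicz argument of Bakry--Qian for the lower bound, and then squeeze rigidity out of the equality cases of the two Cauchy--Schwarz inequalities that appear in it. Throughout, $u$ denotes a first nonzero eigenfunction, normalized so that $-\Delta_\phi u=\lambda_1 u$ and $\int_M u^2\,\mathrm{dvol}_\phi=1$.

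For the inequality I first derive the weighted Bochner identity
\[
\tfrac12\Delta_\phi|\nabla u|^2=|\nabla^2 u|^2+\langle\nabla\Delta_\phi u,\nabla u\rangle+\Ric_\phi^m(\nabla u,\nabla u)+\tfrac{1}{m}\langle\nabla\phi,\nabla u\rangle^2
\]
by commuting $\Delta-\langle\nabla\phi,\nabla\cdot\rangle$ with $\tfrac12|\nabla u|^2$ and using the definition of $\Ric_\phi^m$. Integrating against $\mathrm{dvol}_\phi$ kills the left-hand side, since $\Delta_\phi$ is self-adjoint in $L^2(\mathrm{dvol}_\phi)$, and I then apply the two Cauchy--Schwarz estimates $|\nabla^2 u|^2\geq(\Delta u)^2/n$ and $a^2/n+b^2/m\geq(a+b)^2/(n+m)$ with $a=\Delta u$ and $b=-\langle\nabla\phi,\nabla u\rangle$, so that $a+b=\Delta_\phi u$. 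Combined with $\int(\Delta_\phi u)^2\,\mathrm{dvol}_\phi=\lambda_1^2$, $\int|\nabla u|^2\,\mathrm{dvol}_\phi=\lambda_1$, and the hypothesis $\Ric_\phi^m\geq 2(n+m-1)\lambda g$, this gives $\lambda_1^2/(n+m)-\lambda_1^2+2(n+m-1)\lambda\lambda_1\leq 0$, i.e. $\lambda_1\geq 2(n+m)\lambda$.

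For the rigidity, equality forces both Cauchy--Schwarz inequalities to saturate pointwise. The first yields $\nabla^2 u=(\Delta u/n)\,g$; the second yields $m\Delta_\phi u=-(n+m)\langle\nabla\phi,\nabla u\rangle$. Using $-\Delta_\phi u=2(n+m)\lambda u$ these combine to the pair
\[
\nabla^2 u=-2\lambda u\,g,\qquad \langle\nabla\phi,\nabla u\rangle=2m\lambda u.
\]
The classical Obata theorem applied to the first equation identifies $(M^n,g)$ isometrically with $S^n(\sqrt{2\lambda})$ and forces $u=c\cos(\sqrt{2\lambda}\,\theta)$ in geodesic polar coordinates $(\theta,\omega)$ centered at a maximum point of $u$.

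The main obstacle is ruling out $m>0$ in the equality case. Assuming $m>0$, substituting the above form of $u$ into the constraint $\langle\nabla\phi,\nabla u\rangle=2m\lambda u$ and using $\nabla u=-c\sqrt{2\lambda}\sin(\sqrt{2\lambda}\,\theta)\partial_\theta$ reduces it, along each radial geodesic, to the ODE $\partial_\theta\phi=-m\sqrt{2\lambda}\cot(\sqrt{2\lambda}\,\theta)$; integrating gives $\phi(\theta,\omega)=-m\ln\sin(\sqrt{2\lambda}\,\theta)+h(\omega)$. Hence $f=e^{-\phi/m}=\sin(\sqrt{2\lambda}\,\theta)\,e^{-h(\omega)/m}$ vanishes at both poles of $u$, contradicting the assumed smoothness and strict positivity of $f$ on $M$. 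Therefore $m=0$, at which point $\phi\equiv 0$ and $\Delta_\phi=\Delta$, and the rigidity conclusion reduces to the Obata isometry $M^n\cong S^n(\sqrt{2\lambda})$ already established in the previous step.
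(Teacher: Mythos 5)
Your proof is correct, and it differs from the paper's at precisely the two places where the paper leans on citations. For the lower bound, the paper simply quotes Bakry--Qian (via Wei--Wylie), whereas you rederive it from the weighted Bochner formula; your computation checks out, including the two equality conditions $\nabla^2 u=(\Delta u/n)g$ and $m\Delta_\phi u=-(n+m)\langle\nabla\phi,\nabla u\rangle$, which correctly combine with $-\Delta_\phi u=2(n+m)\lambda u$ to give $\nabla^2 u=-2\lambda u\,g$ (the paper cites Wei--Wylie, Equation 2.7, for this Hessian identity) and $\langle\nabla\phi,\nabla u\rangle=2m\lambda u$. The genuine divergence is in excluding $m\neq 0$: the paper never touches the eigenfunction again, instead noting that $\mathrm{Ric}_\phi^m=\mathrm{Ric}-\frac{m}{f}\nabla^2 f$ and that Obata forces $\mathrm{Ric}=2(n-1)\lambda g$, so the curvature hypothesis gives $\frac{m}{f}\nabla^2 f\leq -2m\lambda g$; for $m\neq 0$ the trace yields $\Delta f\leq -2n\lambda f<0$ everywhere, which is impossible on a compact manifold since $\int_M\Delta f\,\mathrm{dvol}=0$. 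You instead exploit the saturated second Cauchy--Schwarz inequality, integrating $\partial_\theta\phi=-m\sqrt{2\lambda}\cot(\sqrt{2\lambda}\,\theta)$ along radial geodesics to get $f=\sin(\sqrt{2\lambda}\,\theta)\,e^{-h(\omega)/m}$, which vanishes at the poles, contradicting $f>0$. Both arguments are valid: the paper's is shorter and needs no explicit form of the eigenfunction, while yours is self-contained and exhibits the extremal density (the hemisphere-type model with $f\sim\sin$) that explains why equality fails for $m>0$. Two small remarks: your ODE step can be bypassed entirely by evaluating the constraint $\langle\nabla\phi,\nabla u\rangle=2m\lambda u$ at the pole, where $\nabla u=0$ but $u=c\neq 0$, giving $m=0$ immediately; and you should state explicitly that the Bochner--Cauchy--Schwarz argument presumes $m>0$ (you divide by $m$), with the $m=0$ case being exactly the classical Lichnerowicz--Obata theorem under the paper's convention that $\phi$ is constant when $m=0$.
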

\begin{proof}
We deduce from~\cite{Wei_Wylie}*{Equation 3.1} that $\lambda_1\geq 2(m+n)\lambda$. Moreover, if $\lambda_1=2(m+n)\lambda$ and $\Delta_\phi u=-2(m+n)\lambda u$, then~\cite{Wei_Wylie}*{Equation 2.7}
\begin{equation}
\label{hess-equation}
    \nabla^2 u=-2\lambda ug. 
\end{equation} 
Using \cref{hess-equation} and Obata's Theorem~\cite{Obata1962}, we deduce that $(M^n,g)$ is isometric to $S^n(\sqrt{2\lambda})$. Hence, $\mathrm{Ric}=2(n-1)\lambda g$. Moreover, as $\mathrm{Ric}-\frac{m}{f}\nabla^2f=\mathrm{Ric}_\phi^m$, we have
\begin{align*} \frac{m}{f}\nabla^2 f&\leq -2 m\lambda g.\end{align*}
If $m\neq 0$, as $f>0$, we get $\Delta f\leq-2\lambda nf<0$. This is a contradiction.
\end{proof}

\begin{proof}[Proof of \cref{double-derivative-theorem}]
From \cref{quasi-einstein-ambient}, we get  $$v_\phi^m(\rho)\int_0^\rho g^{ij}(u)\, du=(1+\lambda\rho)^{n+m-1}\rho g^{ij}.$$ Hence, for any $m\geq 0$,
\begin{equation}
\begin{aligned}
\label{einstein_L}
(L_{k,\phi}^m)^{ij}&=-{n+m-1\choose k-1}\lambda^{k-1}g^{ij}.
\end{aligned}
\end{equation}
Using \cref{einstein_L,einstein_v}, we write \cref{double-derivative} as
\begin{equation}
\label{negative-case}
(\left.\mathcal{F}_{k,\phi}^m\right|_{\mathcal{C}_{1}})^{\prime \prime}(\omega)=c_k\lambda^{k-1}\int_M \big(|\nabla\omega|_g^2-2(n+m)\lambda \omega^2\big)\, \mathrm{dvol}_\phi,
\end{equation} where $$c_k=(n+m-2k){n+m-1\choose k-1}.$$

If $J_\phi^m<0$, then $\lambda<0$ as well. Hence, we find that the integrand in \cref{negative-case}, $|\nabla\omega|_g^2-2J_\phi^m\omega^2$, is positive. Consequently, $(\left.\mathcal{F}_{k,\phi}^m\right|_{\mathcal{C}_{1}})^{\prime \prime}(\omega)$ has the same sign as $c_k\lambda^{k-1}$. 

If $J_\phi^m>0$, then $\lambda>0$ as well. \cref{eigenvalue-lower-bound} implies that $\lambda_1>2(n+m)\lambda$. Consequently, the integrand in \cref{negative-case} is positive. Therefore, $(\left.\mathcal{F}_{k,\phi}^m\right|_{\mathcal{C}_{1}})^{\prime \prime}(\omega)$ has the same sign as $c_k$.
\end{proof}

\bibliographystyle{plain}
\bibliography{bib.bib}
\end{document}